\documentclass[ 12pt]{amsart}
\usepackage{amsfonts}
\usepackage{mathrsfs}
\usepackage{bbm}
\usepackage{amsfonts,amsthm}
\usepackage{amssymb,amsmath,graphicx}
\usepackage{cases}
\usepackage{float}
\usepackage[colorlinks=true]{hyperref}
\hypersetup{urlcolor=blue, citecolor=red}
\usepackage{hyperref}
\usepackage{enumerate}
\usepackage{cite}
\setlength{\paperheight}{11in}
\setlength{\paperwidth}{8.5in}
\addtolength{\voffset}{-0.25in}
\addtolength{\hoffset}{-0.75in}
\setlength{\textwidth}{6.5in}
\setlength{\textheight}{8.35in}
\setlength{\footskip}{36pt}
\setlength{\marginparsep}{0pt}
\setlength{\marginparwidth}{0in}
\setlength{\headheight}{8pt}
\setlength{\headsep}{20pt}
\setlength{\oddsidemargin}{0.75in}
\setlength{\evensidemargin}{0.75in}

%\normalsize

\newtheorem{theorem}{Theorem}

\newtheorem{lemma}{Lemma}
\newtheorem{proposition}{Proposition}

\newtheorem{remark}{Remark}

\def\Xint#1{\mathchoice
	{\XXint\displaystyle\textstyle{#1}}
	{\XXint\textstyle\scriptstyle{#1}}
	{\XXint\scriptstyle\scriptscriptstyle{#1}}
	{\XXint\scriptscriptstyle\scriptscriptstyle{#1}}
	\!\int}
\def\XXint#1#2#3{{\setbox0=\hbox{$#1{#2#3}{\int}$ }
		\vcenter{\hbox{$#2#3$ }}\kern-.6\wd0}}

\def\dashint{\Xint-}

\def\Div{{\rm div}}
\begin{document}
	\title[Existence and Optimal Convergence Rate]{Existence and Optimal Convergence Rates of Multi-dimensional Subsonic Potential Flows Through an Infinitely Long Nozzle with an Obstacle Inside}
\author{Lei Ma}
\address{School of Mathematical Sciences, Shanghai Jiao Tong University, 800 Dongchuan Road, Shanghai, 200240, China}
\email{yutianml@sjtu.edu.cn}

	\author{Chunjing Xie}
\address{School of mathematical Sciences, Institute of Natural Sciences,
Ministry of Education Key Laboratory of Scientific and Engineering Computing,
and SHL-MAC, Shanghai Jiao Tong University, 800 Dongchuan Road, Shanghai, China}
\email{cjxie@sjtu.edu.cn}

	\date{}
	\maketitle
	\begin{abstract}
		In this paper, the well-posedness and optimal convergence rates of subsonic irrotational flows through a three dimensional infinitely long nozzle with a smooth obstacle inside are established. More precisely, the global existence and uniqueness of the uniformly subsonic flow are obtained via variational formulation as long as the incoming mass flux is less than a critical value.
		% The uniform estimates, together with the compensated compactness method, give the existence of subsonic-sonic flows as the mass flux tends to the critical value.%
		 Furthermore, with the aid of delicate choice of weight functions, we prove the {\it optimal} convergence rates of the flow at far fields via weighted energy estimates and Nash-Moser iteration.\\[3mm]{\bf Keywords:}   Subsonic flows, Potential equation, Nozzles, Optimal convergence rates, Body
	\end{abstract}
	\section{Introduction}
	
	The study on compressible inviscid flows provides many significant and challenging problems. The flows past a body, through a nozzle, and past a wall are typical flow patterns which have physical significance and also include the physical effects \cite{Bers1958Mathematical}. The first rigorous mathematical analysis on the problem for irrotational flows past a body is due to Frankl and Keldysh\cite{Frankl1934Keldysh}.
	The important progress for subsonic flows with prescribed circulation was made by Shiffman \cite{shiffman1952existence} via the variational approach. Later on, Bers \cite{MR65334} proved the existence of two dimensional irrotational subsonic flows around a profile with a sharp trailing edge if the free stream Mach number is less than a critical value. The uniqueness and asymptotic behavior of the subsonic plane flows were established in \cite{CPA:CPA3160100102}. The study of three dimensional irrotational flows around a smooth body was initiated in \cite{Finn1957Three} by Finn and Gilbarg. For the three (or higher) dimensional case, the existence of uniform subsonic irrotational flows around a smooth body was established by Dong  \cite{Dong1977Nonlinear, Dong1993Subsonic} in a weighted function space as long as the incoming Mach number is less than a critical value. When the vorticity of the flow is not zero, the Euler system for subsonic solutions is a hyperbolic-elliptic coupled system so that the problem for flows past a body becomes much harder. The well-posedness theory for two dimensional subsonic flows past a wall or a symmetric body was established in \cite{Chen2016}.  As far as transonic flows past a profile is concerned, it was proved by Morawetz \cite{morawetz1956non, morawetz1957non, morawetz1958non, morawetz1964non} that the smooth transonic flows past an airfoil are usually unstable with respect to the perturbations of the physical boundaries. Later on, Morawetz initiated a program to prove the existence of weak solutions for irrotational flows by the method of compensated compactness \cite{morawetz1985weak, morawetz1995steady}.
The compensated compactness method was successfully used to deal with subsonic-sonic flows recently, see \cite{chen2007two, chen2016subsonic, Huang2011ON, Xie2007GLobal} and reference therein.

The irrotational flows through infinitely long nozzles were first studied in \cite{Xie2007GLobal, XIE20102657} where the authors established the existence and uniqueness of global subsonic flows through two dimensional or three dimensional axially symmetric nozzles as long as the flux is less than a critical value. The existence and uniqueness of the irrotational uniformly subsonic flows in the multidimensional nozzle were established in \cite{Du2011} by the variational method. For subsonic flows with nonzero vorticity, the existence of solutions in two dimensional nozzles was first proved in \cite{Xie2009Existence} when the mass flux is less than a critical value and the variation of Bernoulli's function is sufficiently small. Furthermore, the existence of two dimensional subsonic flows and their optimal convergence rates at far fields were established in \cite{Du2014} for a large class of subsonic flows with large vorticity. Later on, the existence of general two dimensional subsonic flows even with characteristic discontinuity was proved in \cite{chen2017steady}. Subsonic flows with non-zero vorticity through infinitely long nozzles were also studied in various settings, such as axially symmetric flows, two dimensional periodic flows, etc, see\cite{Chen2012Existence,XIE20102657,MR3186843,du2014subsonic} and reference therein. Recently, the subsonic-sonic flow in a convergent nozzle with straight solid walls was studied in \cite{wang2013degenerate} and the properties of the sonic curve were investigated in \cite{wang2016sonic}. We would like to mention that there are important progress on stability of transonic shocks in nozzles, see \cite{xin2005transonic, xin2008three, MR1969202} and reference therein., where the key issue is to study subsonic solutions around some background solutions with shocks as free boundary.

Note that, the wind tunnel experiment can be regarded as the problem for flows past an obstacle in a nozzle. Our ultimate goal is to study the well-posedness theory for multidimensional subsonic flows past a non-smooth body (an open problem posed in \cite{Dong1977Nonlinear}) and through nozzles with a non-smooth body inside. As a first step, in this paper, we study the well-posedness and the optimal convergence rates at far fields for muiti-dimensional subsonic flows through nozzles with a smooth body inside.

%When considering the convergence rates of velocity fields, we show that the convergence rates are closely depends on the geometry the nozzles. The key issue to prove convergence rates of subsonic flows at far fields is to study the asymptotic behavior of gradient of solutions to quasilinear elliptic equations.
%  The first difficulty to study the convergence rates is that the domain is of cylindrical type so that it is not easy to use Kelvin transformation to study the asymptotic behavior as what has been done for flows past a body. The other difficulty is that the potential function is not bounded in $L^{\infty}-$norm so that it is hard for us to adapt the approach developed in \cite{Du2014} for two dimensional flows which is based on the maximum principle.

	Consider the isentropic compressible Euler equations as follows
	\begin{equation}\label{Eulereq}
	\begin{cases}
	\Div(\rho  {\bf u}) = 0,   \\
	\Div(\rho {\bf u}\otimes {\bf u})+\nabla p=0,
	\end{cases}
	\end{equation}
	where $\rho$ represents density, ${\bf u}=(u_1,u_2,u_3)$ is the flow velocity and $p$ is the pressure given by the equation of states $p=p(\rho)$. In this paper, we always assume $p^{\prime}(\rho)>0$ and $p^{\prime\prime}(\rho)\geq0$ for $\rho>0$. For the polytropic gas, the pressure is given by $p=A\rho^{\gamma}$, where $A$ is a positive number and $\gamma>0$ is called the adiabatic exponent.
	
	Suppose that the flow is irrotational, i.e.,
	\begin{equation}\label{irrotational_property}
	\nabla \times {\bf u}=0.
	\end{equation}
	Thus, there exists a potential function $\phi$ such that
	\begin{equation}\label{potential}
	{\bf u}(x)=\nabla\phi.
	\end{equation}
	With the aid of \eqref{Eulereq} and \eqref{irrotational_property}, the following Bernoulli's law holds for irrotational flows (\cite{1948sfswbookC}),
	\begin{equation}\label{bernoulli law}
	\frac{1}{2}|{\nabla\phi}|^2+h(\rho)\equiv C  ,
	\end{equation}
	where $C$ is a constant and $h(\rho)$ is the enthalpy defined by $h^\prime(\rho)=\frac{p^\prime(\rho)}{\rho}$. It follows from \eqref{bernoulli law} that $\rho$ can be written as $\rho(|\nabla\phi|^2)$. Using the mass conservation in \eqref{Eulereq}, the Euler equations can be reduced to the potential equation
	\begin{equation}\label{divergence_form}
	\Div(\rho(|\nabla\phi|^2)\nabla\phi)=0.
	\end{equation}
	Denote $c(\rho)=\sqrt{p^{\prime} (\rho)}$ which is called the sound speed. It is easy to check that when $|{\bf u}|>c(\rho)$ (i.e. the flow is supersonic), the equation \eqref{divergence_form} is hyperbolic; while if $|{\bf u}|<c(\rho)$ (i.e. the flow is subsonic), the equation \eqref{divergence_form} is elliptic. Moreover, there is a critical speed $q_{cr}$ such that $|{\bf u}|<c(\rho)$ if and only if $|{\bf u}|<q_{cr}$  (\cite{1948sfswbookC}). Thus one can normalize $(\rho, {\bf u})$ as follows
	\begin{equation}
	{\bf\hat{u}}=\frac{{\bf u}}{q_{cr}}\quad\text{and}\quad \hat{\rho}=\frac{\rho}{\rho(q_{cr}^2)}.
	\end{equation}
	With an abuse of the notation, we still use ${\bf u}$ and $\rho$ rather than $\bf \hat u$ and $\hat \rho$ later. Denote $q=|\bf u|$. It is easy to see that $\rho q\leq1$ for $q\geq0$ and the subsonic flow means $|{\bf u}|<1$ or $\rho>1$.
	
	We consider the domain to be a nozzle $\tilde\Omega$ which contains an obstacle $\Omega'$ inside. By using the cylindrical coordinates, $\tilde\Omega$ and $\Omega'$ can be written as
	\begin{equation}\label{Omega}
	\tilde\Omega=\bigg\{(r,\theta,x_3)\big|\, r< f_1(\theta,x_3),\,\theta\in[0,2\pi),\,x_3\in\mathbb{R}\bigg\}
	\end{equation} and
	\begin{equation}\label{Omega_prime} \Omega^\prime=\bigg\{(r,\theta,x_3)\big|\, r< f_2(\theta,x_3),\,\theta\in[0,2\pi),\,L_1\leq x_3\leq L_2\bigg\},
	\end{equation}
	respectively, where $L_1$ and $L_2$ are constants.
	Assume
	\begin{equation}
	0\leq f_2\leq C\quad \text{and}\quad\frac{1}{C}\leq f_1-f_2\leq C\quad\text{for any}\, \theta\in [0,2\pi),\,x_3\in [L_1,L_2],
	\end{equation}
	and
	\begin{equation}
	\frac{1}{C}\leq f_1\leq C\quad\text{for any}\, \theta\in [0,2\pi),\,x_3\in\mathbb{R},
	\end{equation}
	where $C$ is a positive constant. Without loss of generality, assume the origin $O\in\Omega'$. Moreover, suppose that $\partial\tilde\Omega$ and $\partial\Omega'$ are $C^{2,\alpha}$. In the rest of the paper, denote
		\begin{equation}
	{\Omega}=\tilde\Omega\setminus\Omega'\quad\text{and}\quad	\Sigma_t=\Omega\cap\{x_3=t\}.
	\end{equation}
We consider subsonic flows in $\Omega$ which satisfy the slip boundary conditions on the solid walls. The problem can be formulated as follows
	\begin{equation}\label{Potentialequation}
	\begin{cases}
	\Div(\rho(|\nabla\phi|^2)\nabla\phi)=0     &\text{in} \ \Omega,\\[2mm]
	\frac{\partial\phi}{\partial\textbf{n}}=0 &\text{on}\ \partial\Omega, \\[2mm]
	\int_{\Sigma_t}\rho(|\nabla\phi|^2)\frac{\partial\phi}{\partial\textbf{l}}ds=m_0,\\[2mm]
	|\nabla\phi|<1,
	\end{cases}
	\end{equation}
    where $\textbf{n}$ is the unit outer normal of $\Omega$ and $\textbf{l}$ is the unit normal pointed to the right of $\Sigma_t$, respectively. $m_0$ is the mass flux of the flow across the nozzle, which is conserved through each cross section.
     \begin{figure}[tb]
    	\centering	\includegraphics[width=0.8\textwidth]{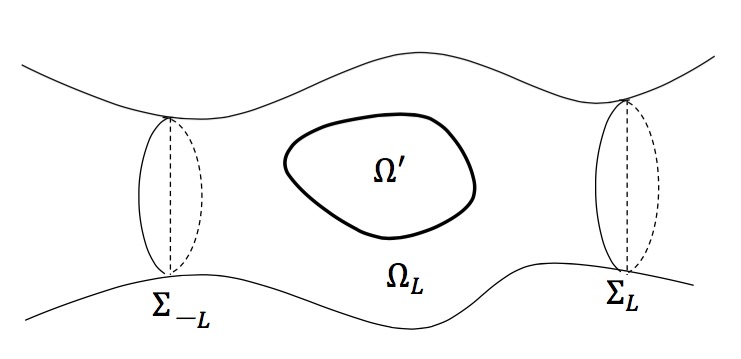}
    	\caption {Domain of the problem\label{fig:1}}
    \end{figure}

Our first main results can be stated as follows.
	\begin{theorem}\label{theorem_1}
		There exists a critical value $\hat{m}$ such that

		(i) if the mass flux $m_0<\hat{m}$, there exists a uniformly subsonic flow through ${\Omega}$, i.e. there exists a solution $\phi$ which solves the problem \eqref{Potentialequation} and satisfies
		\begin{equation}
		Q(m_0)=\sup_{\overline{\Omega}}|\nabla\phi|<1.
		\end{equation} Moreover,
		\begin{equation}
		\|\nabla\phi\|_{C^{1,\alpha}({\Omega})}\leq Cm_0,
		\end{equation}
		where $C$ is a constant independent of $m_0$.

		(ii) The value of  $Q(m_0)$ ranges over $[0,1)$ as $m_0$ varies in $[0,\hat{m})$.
	\end{theorem}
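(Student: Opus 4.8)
The plan is to follow the variational/truncation scheme developed for uniformly subsonic flows in unbounded domains (see \cite{Xie2007GLobal, XIE20102657, Du2011, Du2014}). The functional $\phi\mapsto\tfrac12\int G(|\nabla\phi|^2)$ with $G'=\rho$, whose Euler--Lagrange equation is \eqref{divergence_form}, is neither convex nor coercive once $|\nabla\phi|$ reaches the sonic value $1$, so I would first perform a \emph{subsonic truncation}: for small $\varepsilon>0$ replace $\rho$ by a smooth $\rho_\varepsilon$ with $\rho_\varepsilon(s)=\rho(s)$ for $0\le s\le 1-2\varepsilon$, modified for $s\ge 1-\varepsilon$ so that the two eigenvalue families $\rho_\varepsilon(s)$ and $\rho_\varepsilon(s)+2s\rho_\varepsilon'(s)$ of the coefficient matrix of $\Div(\rho_\varepsilon(|\nabla\phi|^2)\nabla\phi)=0$ stay between two positive constants (depending on $\varepsilon$). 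With $G_\varepsilon'=\rho_\varepsilon$, the functional $I_\varepsilon[\phi]=\tfrac12\int G_\varepsilon(|\nabla\phi|^2)$ is then strictly convex with linear growth in $|\nabla\phi|^2$.

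Next I would solve a problem on the truncated nozzle $\Omega_N=\Omega\cap\{-N<x_3<N\}$ for $N$ large. Minimizing
\[
I_{\varepsilon,N}[\phi]=\int_{\Omega_N}\tfrac12 G_\varepsilon(|\nabla\phi|^2)\,dx-\frac{m_0}{|\Sigma_N|}\int_{\Sigma_N}\phi\,ds+\frac{m_0}{|\Sigma_{-N}|}\int_{\Sigma_{-N}}\phi\,ds
\]
over $H^1(\Omega_N)/\mathbb{R}$ produces, by the direct method (coercivity from the linear growth of $G_\varepsilon$ and a Poincar\'e inequality on $\Omega_N$, strict convexity for uniqueness), a unique minimizer $\phi_{\varepsilon,N}$. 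Its Euler--Lagrange system is the modified equation in $\Omega_N$ together with $\partial_{\mathbf n}\phi=0$ on the solid walls and a natural flux condition on the two artificial ends $\Sigma_{\pm N}$; integrating the equation over $\Omega\cap\{-N<x_3<t\}$ shows the flux through every cross section $\Sigma_t$ equals $m_0$. Uniform ellipticity of the truncated operator and the $C^{2,\alpha}$ regularity of $\partial\Omega$ (including the obstacle boundary $\partial\Omega'$) yield $\phi_{\varepsilon,N}\in C^{2,\alpha}_{\mathrm{loc}}(\overline{\Omega_N})$.

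The core of the proof --- and the step I expect to be hardest --- is a priori estimates \emph{uniform in both $\varepsilon$ and $N$}. The argument is a bootstrap. Assuming $\sup_{\Omega_N}|\nabla\phi_{\varepsilon,N}|\le 1-\delta$, in that range $\rho_\varepsilon=\rho$, so $\phi_{\varepsilon,N}$ solves the genuine potential equation, which is uniformly elliptic with constants depending only on $\delta$; differentiating it, $w=|\nabla\phi_{\varepsilon,N}|^2$ satisfies a homogeneous second-order elliptic inequality, hence $\max_{\overline{\Omega_N}}w$ is attained on $\partial\Omega_N$. On the curved walls and on $\partial\Omega'$ the slip condition and the $C^{2,\alpha}$ geometry convert the oblique derivative of $w$ into controlled curvature terms (a Hopf-type boundary-gradient bound), and a cross-section-by-cross-section energy estimate --- needed precisely because $\Omega$ is unbounded --- exploiting the exact flux value $m_0$ controls $w$; this gives $\sup_{\Omega_N}|\nabla\phi_{\varepsilon,N}|\le \bar q(m_0)$ with $\bar q$ independent of $\varepsilon,N$, $\bar q(0)=0$, and $\bar q(m_0)<1$ for $m_0$ below a threshold, which one \emph{takes} as the definition of $\hat m$; this closes the bootstrap a posteriori, and by Schauder theory also gives $\|\nabla\phi_{\varepsilon,N}\|_{C^{1,\alpha}}\le Cm_0$ on interior subdomains with $C$ independent of $\varepsilon,N$. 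The genuine difficulties are the boundary estimates near the curved walls and the obstacle, and making the energy bound insensitive to $N$.

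With these uniform bounds, a diagonal argument gives, along a subsequence, $\phi_{\varepsilon,N}\to\phi_\varepsilon$ in $C^{1}_{\mathrm{loc}}(\overline\Omega)$ as $N\to\infty$, where $\phi_\varepsilon$ solves the modified problem on $\Omega$ with flux $m_0$ and $|\nabla\phi_\varepsilon|\le\bar q(m_0)<1$; choosing $\varepsilon<1-\bar q(m_0)$ forces $\rho_\varepsilon\equiv\rho$ on the range of $|\nabla\phi_\varepsilon|^2$, so $\phi:=\phi_\varepsilon$ already solves \eqref{Potentialequation}, with $Q(m_0)=\sup_{\overline\Omega}|\nabla\phi|\le\bar q(m_0)<1$ and $\|\nabla\phi\|_{C^{1,\alpha}(\Omega)}\le Cm_0$. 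This establishes (i). For (ii): $Q(0)=0$ because for $m_0=0$ the minimizer has $\nabla\phi\equiv0$; applying Cauchy--Schwarz to $\int_{\Sigma_t}\rho\,\partial_{\mathbf l}\phi\,ds=m_0$ gives $Q(m_0)\gtrsim m_0$, so $Q>0$ for $m_0>0$, and the constructed branch is monotone in $m_0$; $Q$ is continuous by the uniform estimates together with uniqueness of the subsonic solution; and $\sup_{m_0<\hat m}Q(m_0)=1$, since otherwise the bootstrap --- which uses only that $\sup|\nabla\phi|$ is bounded away from $1$ --- would yield a subsonic solution for some $m_0>\hat m$, contradicting the definition of $\hat m$. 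The intermediate value theorem then shows $Q$ maps $[0,\hat m)$ onto $[0,1)$.
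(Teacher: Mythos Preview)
Your overall architecture --- subsonic truncation of $\rho$, domain truncation, variational existence on the finite nozzle, uniform estimates, diagonal passage $N\to\infty$, removal of the truncation once $\sup|\nabla\phi|<1-\varepsilon$, and continuity of $Q$ together with the intermediate value theorem for part (ii) --- is correct and matches the paper.

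The gap is in the a priori gradient estimate, which is the heart of the argument. You invoke a maximum principle for $w=|\nabla\phi|^2$ to push the maximum to $\partial\Omega_N$, then a Hopf-type bound on the solid walls and obstacle, and a cross-section energy estimate for the rest. Neither step closes. Under the slip condition, the Hopf computation at a boundary maximum of $w$ yields only the relation $\partial_n w = 2\,II(\nabla_T\phi,\nabla_T\phi)$ with the second fundamental form; for a general $C^{2,\alpha}$ nozzle wall this sign is perfectly compatible with Hopf and produces no bound on $w$ (and there is no convexity hypothesis on $\Omega'$ either). On the artificial ends $\Sigma_{\pm N}$ you impose only flux/Neumann data, so there is no pointwise control there; a cross-section energy estimate gives at best local $L^2$ control of $\nabla\phi$, not $L^\infty$. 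The bootstrap therefore does not close, and the maximum-principle route that works for flows past a body does not transfer to the nozzle geometry.

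The paper avoids the maximum principle entirely. It first proves an \emph{average} estimate $\dashint_{\Omega_L}|\nabla\phi|^2\,dx\le Cm_0^2$ with $C$ independent of $L$ directly from the variational structure (Lemma~1); the new point is precisely the obstacle, handled by mapping an auxiliary subdomain $U_L\subset\Omega_L$ that skirts $\Omega'$ onto a straight cylinder so that the boundary term $\int_{\Sigma_L}\psi$ is controlled by $\|\nabla\psi\|_{L^2(\Omega_L)}$. This is then localized to $\dashint_{\Omega(t,t+1)}|\nabla\phi|^2\,dx\le Cm_0^2$ uniformly in $t$, and Nash--Moser iteration on the divergence-form equation converts the local $L^2$ bound into $\|\nabla\phi\|_{L^\infty}\le Cm_0$ and then the $C^{1,\alpha}$ estimate. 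If you replace your maximum-principle/Hopf step by this energy--to--Nash--Moser route, the remainder of your outline goes through.
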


	Furthermore, if the additional structure of the nozzle is known, we have the following optimal convergence rates of the flows at far fields.
	\begin{theorem}\label{convergence_thm} Let $\overline\Sigma=\{(r,\theta)|\,r\leq \bar f,\,\theta\in[0,2\pi)\}$ with positive constants $\bar f$ and $\bar q$ satisfying $\rho(\bar q^2)\bar q=m_0/(2\pi\bar f^2)$ and $\bar q\leq c\big({\rho(\bar q^2)}\big)$.

		 (i) If the nozzle is a straight cylinder in the downstream, i.e. $\Omega\cap\{x_3\geq K\}=\overline\Sigma\times[K,+\infty)$ for some positive $K$,
		 then there exists a positive constant $\mathfrak d $ such that
		\begin{equation}\label{cylinder_decay}
		|\nabla\phi-(0,0, \bar q)|\leq Ce^{-{\mathfrak d x_3}} \quad\text{for $x\in \Omega\cap\{x_3\geq K\}$},
		\end{equation}
		 where $C$ is a constant independent of $x_3$.

		(ii) If there exists a $K>0$ such that
		\begin{equation}\label{boundary_algebratic_rate}
		\sum\limits_{k=0}^{2}\big|x_3^k\partial^k(f_1-\bar{f})\big|\leq\frac{C}{x_3^l}\quad\text{for $x_3>K$},
		\end{equation}
		with some constant $l>0$, then the velocity field satisfies
		\begin{equation}\label{algebra_decay}
		|\nabla\phi-(0,0,\bar q)|\leq C x_3^{-{l}}\quad\text{for $x\in \Omega\cap\{x_3\geq K\}$},
		\end{equation}
		where $C$ is a constant independent of  $x_3$.
	 Similarly, if the boundary of the nozzle satisfying the same asymptotic behavior as \eqref{boundary_algebratic_rate} at the upstream, the same conclusion as \eqref{algebra_decay} holds at the upstream.
	\end{theorem}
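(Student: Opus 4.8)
The plan is to transfer everything to a perturbation problem around the constant state $(0,0,\bar q)$ in the (straightened) downstream cylinder, to prove decay of the perturbation in weighted energy norms, and then to convert this into pointwise decay by a Moser iteration; the \emph{optimal} feature will come from matching the energy weight exactly to the decay rate of the nozzle boundary.

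\textbf{Reduction.} First I would record the qualitative far–field limit $\nabla\phi(x)\to(0,0,\bar q)$ as $x_3\to+\infty$: using the uniform bounds of Theorem \ref{theorem_1}, any sequence of downstream translates of $\phi$ subconverges in $C^1_{loc}$ to a uniformly subsonic flow with flux $m_0$ in the infinite cylinder $\overline\Sigma\times\mathbb R$, and a Liouville/uniqueness argument in the cylinder forces this limit to be $\bar q x_3+\mathrm{const}$ (this is what pins down $\bar q$ and makes the energy integrals below finite). In case (i) set $\psi=\phi-\bar q x_3$. In case (ii), first flatten the nozzle by a change of variables $y=\Psi(x)$ with $y_3=x_3$ and $y'$ a smooth $x_3$–dependent diffeomorphism of $\Sigma_{x_3}$ onto $\overline\Sigma$; hypothesis \eqref{boundary_algebratic_rate} yields $|D\Psi-I|\lesssim x_3^{-l}$, $|\partial_3(D\Psi-I)|+|D^2\Psi|\lesssim x_3^{-l-1}$, and set $\psi=\phi\circ\Psi^{-1}-\bar q y_3$. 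Since \eqref{divergence_form} is in divergence form, subtracting the background flow and expanding $\rho(|\nabla\phi|^2)\nabla\phi-\rho(\bar q^2)\bar q\,e_3$ along the segment joining $\bar q e_3$ to $\nabla\phi$ gives a quasilinear divergence–form equation
\begin{equation*}
\Div\!\big(\tilde A(x)\nabla\psi\big)=\Div\mathbf F\qquad\text{in }\ \overline\Sigma\times(K,+\infty),
\end{equation*}
where $\tilde A\in C^{0,\alpha}$ is uniformly elliptic by the uniform subsonicity of Theorem \ref{theorem_1}, $\mathbf F\equiv0$ in case (i) and $|\mathbf F|\lesssim x_3^{-l}$ in case (ii). The slip condition, together with $e_3\cdot\mathbf n=0$ on the straight lateral wall, becomes the homogeneous co–normal condition $(\tilde A\nabla\psi-\mathbf F)\cdot\mathbf n=0$, and the flux constraint, with $\bar q$ fixed by mass conservation through $\overline\Sigma$, gives $\int_{\overline\Sigma\cap\{x_3=t\}}(\tilde A\nabla\psi-\mathbf F)\cdot e_3\,ds=0$ (in case (ii) up to a harmless $O(x_3^{-l})$ defect from the area mismatch).

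\textbf{Weighted energy decay.} For $t\ge K$ put $E(t)=\int_{\{x_3>t\}}\nabla\psi^\top\tilde A\nabla\psi\,dx$. Testing the equation against $(\psi-a_t)w(x_3)^2$, with $a_t$ the mean of $\psi$ on the cross section, integrating over $\{x_3>t\}$, using the lateral co–normal condition and the zero–flux identity, and absorbing the indefinite cross term $\int w w'(\psi-a_t)(\tilde A\nabla\psi)\cdot e_3$ by splitting $\psi$ into cross–sectional mean and fluctuation — estimating the fluctuation via the cross–sectional Poincaré inequality (which supplies the subordinate factor $w'/w=O(1/x_3)$) and the mean via a one–dimensional Hardy inequality plus the zero–flux identity — one obtains: in case (i), with $w\equiv1$, the Saint-Venant inequality $E(t)\le -C_0E'(t)$, hence $E(t)\le E(K)e^{-(t-K)/C_0}$, with $C_0$ governed by the first nonzero Neumann eigenvalue of $-\Delta'$ on $\overline\Sigma$ and by the ellipticity of $\tilde A$; in case (ii), with $w=x_3^{\sigma}$, the weighted bound
\begin{equation*}
\int_{\{x_3>K\}}x_3^{2\sigma}|\nabla\psi|^2\,dx\ \le\ C\int_{\{x_3>K\}}x_3^{2\sigma}|\mathbf F|^2\,dx+C\ \lesssim\ \int_{K}^{\infty}x_3^{2\sigma-2l}\,dx+C,
\end{equation*}
which, combined with the Saint-Venant inequality and with $\sigma$ pushed up to the critical exponent keeping the right side finite, yields the sharp cross–sectional decay $\int_{\Sigma_t}|\nabla\psi|^2\,ds\le Ct^{-2l}$.

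\textbf{From energy to pointwise bounds.} Finally I would differentiate the equation — each $\partial_k\psi$ solves an analogous divergence–form equation with $C^{0,\alpha}$ coefficients and no zero–order term — and apply local boundedness estimates (Moser iteration for the $L^\infty$–from–$L^2$ bound) together with interior and boundary Schauder estimates on unit–size subcylinders centered at level $x_3=t$, using the co–normal condition near the lateral wall. This converts $\|\nabla\psi\|_{L^2(\{t-2<x_3<t+2\})}\lesssim e^{-(t-K)/(2C_0)}$ (resp.\ $\lesssim t^{-l}$, using also $\|\mathbf F\|_{L^\infty}\lesssim t^{-l}$) into $|\nabla\psi(x)|\le Ce^{-\mathfrak d x_3}$ for any $\mathfrak d<1/(2C_0)$ (resp.\ $|\nabla\psi(x)|\le Ct^{-l}$); since $\nabla\psi=\nabla\phi-(0,0,\bar q)$ after undoing $\Psi$, these are \eqref{cylinder_decay} and \eqref{algebra_decay}. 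The upstream statement follows by reflecting the $x_3$–axis.

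\textbf{Main obstacle.} The crux is reaching the \emph{optimal} exponent $l$ in case (ii) rather than a lossy $l-\tfrac12$: a naive weighted estimate with a pure power weight only reaches $\sigma<l-\tfrac12$ because the $x_3$–integration of $|\mathbf F|^2\sim x_3^{-2l}$ is borderline. Closing this gap requires exploiting the finer structure of $\mathbf F$ — after flattening it is ``almost vertical'', so $\Div\mathbf F$ and the co–normal defect actually decay like $x_3^{-l-1}$, one order faster than $\mathbf F$ itself — and/or bootstrapping the weighted estimate, in either case with the Hardy–type control of the cross–sectional average of $\psi$ being the delicate ingredient, since $\psi$ may itself grow sublinearly when $l<1$. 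This is precisely the ``delicate choice of weight functions''. Carrying the variable geometry, the merely Hölder–continuous quasilinear coefficients, and the co–normal boundary condition through all the weighted–energy and Moser steps is the remaining technical burden.
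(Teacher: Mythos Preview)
Your overall architecture --- subtract the background, weighted energy estimate, then Nash--Moser to upgrade $L^2$ to $L^\infty$ --- is the same as the paper's, and your identification of the ``main obstacle'' (reaching $l$ rather than $l-\tfrac12$) is exactly right. But the execution diverges from the paper in two technical choices, and the paper's choices are what make the optimal exponent come out cleanly rather than needing the bootstrap you allude to.

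First, the paper never flattens globally for the energy step. It works in the original domain with $\Psi=\phi-\bar q x_3$, keeps the inhomogeneous conormal data $a_{ij}\partial_j\Psi\,n_i=-\rho(\bar q^2)\bar q\,n_3=O(x_3^{-l-1})$ and the cross--sectional flux defect $\int_{\Sigma_{x_3}}a_{i3}\partial_i\Psi\,dx'=O(x_3^{-l})$ as explicit source terms, and estimates them directly. Second, instead of a global power weight $x_3^{2\sigma}$ on the half--cylinder, the paper uses an \emph{exponential} weight $\zeta$ supported on a finite window $[t_1-1,t_2+1]$, obtains a one--step contraction
\[
\frac{1}{t_2-t_1}\int_{\Omega(t_1,t_2)}|\nabla\Psi|^2\;\le\;\mathfrak b\,\frac{1}{t_2-t_1+2}\int_{\Omega(t_1-1,t_2+1)}|\nabla\Psi|^2\;+\;\frac{C}{(t_1-1)^{2l}},\qquad \mathfrak b<1,
\]
and then iterates $\sim T/2$ times starting from $t_1=T$. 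The geometric sum $\sum_j \mathfrak b^j/T^{2l}$ gives $C/T^{2l}$ with no loss. This sidesteps both your borderline integrability of $x_3^{2\sigma}|\mathbf F|^2$ and the need to know $E(K)<\infty$ a priori: only the uniform bound $|\nabla\Psi|\le C$ from Theorem~\ref{theorem_1} is used, not the qualitative convergence you invoke. In case (i) the same windowed weight with $h=T/2$ gives the exponential decay directly, again without assuming integrability on the infinite half--cylinder.

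For the pointwise step, the paper also differs: rather than differentiating after your global flattening, it flattens \emph{locally} near each boundary point and introduces an auxiliary function $\vartheta$ (built from the Neumann datum $g\sim n_3$) to reduce to a homogeneous conormal problem for $v=\Psi-\vartheta$; the Moser iteration is then run on $w_k=\partial_{y_k}v-\Theta\delta_{k3}$, where $\Theta$ kills the residual boundary value of $\partial_{y_3}v$. Your route (homogeneous conormal for $\tilde A\nabla\psi-\mathbf F$ after global flattening) should also work, but you will still have to carry the $\mathbf F$--terms through the Moser iteration for $\partial_k\psi$, which produces the same $O(x_3^{-l-1})$ constants the paper calls $\mathcal K$ and $\Theta$.

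In short: your plan is sound, but the paper's finite--window iteration is the ``delicate choice of weight'' that delivers the sharp rate without the bootstrap you flag as the obstacle; if you pursue the power--weight approach you will indeed have to exploit the extra order of decay in $\Div\mathbf F$ and the conormal defect to avoid the $\tfrac12$--loss, and you should also justify finiteness of $E(K)$ (your Liouville argument gives $\nabla\psi\to0$, not $\nabla\psi\in L^2$) or, more simply, imitate the paper and work on finite windows.
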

There are few remarks in order.
\begin{remark}
	The convergence rates \eqref{cylinder_decay} and \eqref{algebra_decay} do not depend on $\Omega'$. Hence the convergence rates also hold for subsonic flows in nozzles obtained in \cite{Du2011} .
\end{remark}
\begin{remark}\label{optimalremark}
	The convergence rate \eqref{algebra_decay} is optimal. Indeed, suppose that there exists a constant $C$ such that
    \begin{equation}
    |f_1-\bar f| =\frac{C}{x_3^l}\quad\text{for $x_3>K$}.
    \end{equation}
    It follows from the definition of $\bar q$ that
	\begin{equation}\label{equalflux}
	\begin{split}
	0&=\int_{\Sigma_{x_3}}\rho(|\nabla\phi|^2)\partial_3\phi dx'-\int_{\overline\Sigma}\rho{(\bar q^2})\bar qdx'\\
	&=\int_{\Sigma_{x_3}}\rho(|\nabla\phi|^2)\partial_3\phi-\rho{(\bar q^2})\bar qdx'+\pi(f^2-\bar f^2)\rho(\bar q^2)\bar q.
	\end{split}
	\end{equation}
		The straightforward computations yield that
	\begin{equation}\label{optimal}
	\begin{split}
    &\quad\int_{\Sigma_{x_3}}\rho(|\nabla\phi|^2)\partial_3\phi-\rho{(\bar q^2})\bar qdx'\\
    &=\int_{\Sigma_{x_3}}\bigg(\rho(|\nabla\phi|^2)-\rho(\bar q^2)\bigg)\bar q+\rho(|\nabla\phi|^2)(\partial_3\phi-\bar q)dx'\\
    &=\int_{{\Sigma}_{x_3}}\bar q\bigg[\rho'(\bar q^2)\big(|\nabla\phi|^2-\bar q^2\big)+O\bigg(\big(|\nabla\phi|^2-\bar q^2\big)^2\bigg)\bigg]+\rho(|\nabla\phi|^2)(\partial_3\phi-\bar q)dx'\\
    &=\int_{\Sigma_{x_3}}\big[\bar q\rho'(\bar q^2)(\partial_3\phi+\bar q)+\rho(|\nabla\phi|^2)\big](\partial_3\phi-\bar q)+\bar q\rho'(\bar q^2)\big[|\partial_1\phi|^2+|\partial_2\phi|^2\big]\\
    &\quad\quad+ O\bigg(\big(|\nabla\phi|^2-\bar q^2\big)^2\bigg).
	\end{split}
	\end{equation}
Combining \eqref{algebra_decay}, \eqref{equalflux} and \eqref{optimal}
 yields that there exists a constant $\tilde{C}$ such that
 \begin{equation}
\max\limits_{x\in\Sigma_{x_3}}|\partial_3\phi-\bar q|\geq \frac{\tilde{C}}{x_{3}^l} \quad\text{for $x_3$ sufficiently large.}
\end{equation}
This implies that the convergence rate \eqref{algebra_decay} is optimal.\end{remark}

\begin{remark}
	Applying the compensated compactness framework developed in \cite[Theorem 2.1]{Huang2011ON}, one can obtain the existence of the weak subsonic-sonic solutions through an infinitely long nozzle with a body inside.
	\end{remark}
	
   Here we give the key ideas and comments on main techniques for the proof of Theorems \ref{theorem_1} and \ref{convergence_thm}. The existence of weak solutions is obtained via the variational method inspired by \cite{Du2011} where the major new difficulty is the average estimate presented in Lemma \ref{lemma1}. The regularity of weak solutions is improved since the subsonic potential flows are governed by elliptic equations.  The key issue to prove convergence rates of subsonic flows at far fields is to study the asymptotic behavior of gradient of solutions to quasilinear elliptic equations.
   The first difficulty to study the convergence rates is that the domain is of cylindrical type so that it is not easy to use Kelvin transformation to study the asymptotic behavior as what has been done for flows past a body. The another difficulty is that the potential function is not bounded in $L^{\infty}-$norm so that it is hard for us to adapt the approach developed in \cite{Du2014} for two dimensional flows which is based on the maximum principle. Some studies on far fields behavior for solution of elliptic equations in cylindrical domains can be found in \cite{lax1957phragmen,flavin1992asymptotic,MR3405540}. Inspired by the work \cite{Ole1981ON}, we combine the convergence rates of the boundaries and the weighted energy estimate with delicate choice of weight to get an $L^2-$ decay of gradients of the velocity potential.  $L^\infty-$norm of the $\nabla\phi$ is established via Nash-Moser iteration.

The rest of this paper is organized as follows. In Section $2$, we adapt the variational method in \cite{Du2011}  to establish the existence of subsonic solutions. In Sections $3$, the optimal convergence rates of velocity at far fields are established..

	\section{Existence and uniqueness of subsonic solution with small flux}
	 In order to deal with the possible degeneracy near sonic state, we first study the problem with subsonic truncation so that the truncated equation is uniformly elliptic. The key ingredient is a priori estimate for the truncated domain.
	\subsection{Subsonic solutions of the truncated problem}
	When $|\nabla\phi|$ goes to $1$, the potential equation \eqref{divergence_form} is not uniformly elliptic. Another difficulty for the problem \eqref{Potentialequation} is that the domain ${\Omega}$ is not bounded. To overcome these difficulties, we truncate both the coefficients and the domain.
	Define $H_\epsilon(s^2)$ and $F_\epsilon(q^2)$ as follows
	\begin{equation}\label{densitytruncation}
	H_\epsilon(s^2)=
	\begin{cases}
	\rho(s^2) &\ \text{if $s^2<1-2\epsilon$},\\
	\text{smooth and decreasing} &\ \text{if $1-2\epsilon\leq s^2\leq1-\epsilon$},\\
	\rho(1-\frac{3\epsilon}{2}) &\ \text{if $s^2\geq 1-\epsilon$}
	\end{cases}
	\end{equation}
	and
	\begin{equation}
	F_\epsilon(q^2)=\frac{1}{2}\int_0^{q^2}H_\epsilon(\tau)d\tau,
	\end{equation}
	where $\epsilon$ is a small positive constant. One can easily check that there exists a positive constant $C(\epsilon)$ depending on $\epsilon$ such that
	\begin{equation}\label{coefficient_property}
	\frac{1}{C(\epsilon)}q^2\leq F_\epsilon(q^2)\leq C(\epsilon)q^2 \quad  \text{and}\quad \frac{1}{C(\epsilon)}\leq H_\epsilon(s^2)+2H_\epsilon'(s^2)s^2<H_\epsilon(s^2)<C(\epsilon).
	\end{equation}
	Denote
	\begin{equation}
	a_{ij}(\nabla\phi)=H_\epsilon(|\nabla\phi|^2)\delta_{ij}+2H_\epsilon'(|\nabla\phi|)\partial_i\phi\partial_j\phi.
	\end{equation}
	It is easy to see that there exist two positive constants $\lambda$ and $\Lambda$ such that
	\begin{equation}\label{elliptic_property}
	\lambda|\xi|^2<a_{ij}\xi_i\xi_j<\Lambda|\xi|^2,\quad\text{for any $\xi\in\mathbb{R}^3$},
	\end{equation}
	where the repeated indices mean the summation for $i$, $j$ from $1$ to $3$. This  convention is used in the whole paper.
	
For any sufficiently large positive number $L$ and any set $U$, denote
	\begin{equation*}
	\Omega_L=\Omega\cap\big\{|x_3|<L\big\}\quad\text{and} \quad\dashint_Ufdx=\frac{1}{|U|}\int_Ufdx,\quad\end{equation*} where $f\in L^1(U)$. Later on, the following notations will be used
	\begin{equation*}\overline{\text{S}}=\inf\limits_{t\in\mathbb{R}}|{\Sigma}_{t}| \quad \text{and}\quad \underline{\text{S}}=\sup\limits_{t\in\mathbb{R}}|{\Sigma}_{t}|.
	\end{equation*}
	
	In order to study the problem \eqref{Potentialequation}, we consider the $\textbf{truncated problem}$:
	\begin{equation}\label{truncated_equation}
	\begin{cases}
	\Div(H_\epsilon(|\nabla\phi|^2)\nabla\phi)=0 &\text{in }{\Omega}_L, \\
	\frac{\partial\phi}{\partial \textbf{n}}=0 &\text{on } \partial\Omega_L,\\
	H_\epsilon(|\nabla\phi|^2)\frac{\partial\phi}{\partial x_3}=\frac{m_0}{|\Sigma_{L}|}  &\text{on }\Sigma_{L},\\
	\phi=0  &\text{on} \,\Sigma_{-L}.
	\end{cases}
	\end{equation}
   Define the space
	\begin{equation}
	\mathcal H_L=\{\phi\in H^1({\Omega}_L):\phi=0\text{ on } \Sigma_{-L}\}.
	\end{equation}
	It is easy to see that $\mathcal H_L$ is a Hilbert space under $H^1$ norm.
	 $\phi$ is said to be a weak solution of the problem \eqref{truncated_equation} in $\mathcal H_L$ if
	\begin{equation}\label{weak_solution}
	\int_{{\Omega}_L}H_\epsilon(|\nabla\phi|^2)\nabla\phi\cdot\nabla\psi dx-\frac{m_0}{|\Sigma_{L}|}\int_{\Sigma_{L}}\psi dx'=0,\quad\text{for any $\psi\in \mathcal H_L.$}
	\end{equation}
	Define
	\begin{equation}\label{functional}
    \mathcal{I}_L(\psi)=\int_{{\Omega}_L}F_\epsilon(|\nabla\psi|^2)dx-\frac{m_0}{|\Sigma_{L}|}\int_{\Sigma_{L}}\psi dx'.
	\end{equation}
	The straightforward calculations show that if $\phi$ is a minimizer of $\mathcal I_L$, i.e.,
	\begin{equation}\label{minipb31}
	\mathcal I_L(\phi)=\min_{\psi\in\mathcal H_L}\mathcal I_L(\psi),
	\end{equation}
     then $\phi$ must satisfy \eqref{weak_solution}.

     First, we have the following lemma on the existence of minimizer and basic estimate for the minimizer of the problem \eqref{minipb31}.
	\begin{lemma}\label{lemma1}
		For any sufficiently large $L$, $\mathcal I_L(\psi)$ has a minimizer $\phi\in \mathcal H_L$. Moreover, this minimizer  satisfies
		\begin{equation}\label{key_estimae}
		\dashint_{{\Omega}_L}|\nabla\phi|^2dx\leq Cm_0^2,
		\end{equation}
		where $C$ is independent of $L$.
	\end{lemma}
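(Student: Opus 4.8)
The plan is to establish existence of a minimizer by the direct method of the calculus of variations, and then to prove the uniform (in $L$) average estimate \eqref{key_estimae} by a careful choice of test function together with the coercivity of $F_\epsilon$.

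\textbf{Existence of a minimizer.} First I would show $\mathcal I_L$ is bounded below on $\mathcal H_L$ and coercive. By \eqref{coefficient_property} we have $F_\epsilon(|\nabla\psi|^2)\geq \tfrac{1}{C(\epsilon)}|\nabla\psi|^2$, so the leading term controls $\|\nabla\psi\|_{L^2(\Omega_L)}^2$. Since $\psi=0$ on $\Sigma_{-L}$, the Poincar\'e inequality on $\Omega_L$ gives $\|\psi\|_{H^1(\Omega_L)}\leq C(L)\|\nabla\psi\|_{L^2(\Omega_L)}$, and the trace theorem bounds $\left|\int_{\Sigma_L}\psi\,dx'\right|\leq C(L)\|\psi\|_{H^1(\Omega_L)}\leq C(L)\|\nabla\psi\|_{L^2(\Omega_L)}$. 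Combining these, $\mathcal I_L(\psi)\geq \tfrac{1}{C(\epsilon)}\|\nabla\psi\|_{L^2}^2 - C(L)m_0\|\nabla\psi\|_{L^2} \to +\infty$ as $\|\psi\|_{H^1}\to\infty$, which gives coercivity and a finite infimum. Take a minimizing sequence $\{\psi_k\}$; it is bounded in $\mathcal H_L$, so up to a subsequence $\psi_k\rightharpoonup\phi$ weakly in $H^1$. The linear term $\psi\mapsto \frac{m_0}{|\Sigma_L|}\int_{\Sigma_L}\psi\,dx'$ is weakly continuous by compactness of the trace embedding $H^1(\Omega_L)\hookrightarrow L^2(\Sigma_L)$. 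For the first term, since $\tau\mapsto F_\epsilon(\tau)$ is the integral of the positive function $\tfrac12 H_\epsilon$ and $H_\epsilon>0$, the map $p\mapsto F_\epsilon(|p|^2)$ is convex on $\mathbb R^3$; hence $\psi\mapsto\int_{\Omega_L}F_\epsilon(|\nabla\psi|^2)\,dx$ is weakly lower semicontinuous. Therefore $\mathcal I_L(\phi)\leq\liminf_k\mathcal I_L(\psi_k)=\inf$, so $\phi$ is a minimizer, and as noted in the excerpt it satisfies \eqref{weak_solution}.

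\textbf{The average estimate.} This is the main obstacle, because the Poincar\'e and trace constants above blow up as $L\to\infty$, so the crude bound is useless for \eqref{key_estimae}. The idea is to compare $\mathcal I_L(\phi)$ with $\mathcal I_L$ evaluated at an explicit competitor that carries the correct flux at linear cost in $L$. A natural choice is a function of $x_3$ alone: let $\Psi(x)=b\,x_3$ (or more carefully $\Psi(x)= b(x_3+L)$, adjusted so $\Psi=0$ on $\Sigma_{-L}$), where the constant $b$ is chosen so that the flux condition is consistent with $m_0$ on average; since $|\nabla\Psi|=|b|$ is constant one estimates $\int_{\Omega_L}F_\epsilon(b^2)\,dx \leq C(\epsilon)b^2|\Omega_L|$ and $\frac{m_0}{|\Sigma_L|}\int_{\Sigma_L}\Psi\,dx' = m_0 b\cdot 2L$. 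Roughly, choosing $b\sim m_0/\overline{\mathrm S}$ makes $\mathcal I_L(\Psi)\leq C m_0^2 |\Omega_L|/L \cdot L /|\Omega_L|\cdot\dots$, i.e. $\mathcal I_L(\Psi)\leq -c\, m_0^2 L$ up to lower order, with constants depending only on $\overline{\mathrm S},\underline{\mathrm S}$ and $\epsilon$ but not $L$. On the other side, $\mathcal I_L(\phi)\geq \tfrac{1}{C(\epsilon)}\int_{\Omega_L}|\nabla\phi|^2\,dx - \frac{m_0}{|\Sigma_L|}\int_{\Sigma_L}\phi\,dx'$, and the linear term must be controlled by the quadratic one. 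The key point — and where the real work lies — is to estimate $\left|\int_{\Sigma_L}\phi\,dx'\right|$ in terms of $\int_{\Omega_L}|\nabla\phi|^2\,dx$ with an $L$-\emph{dependent but sub-quadratic} constant. Writing $\int_{\Sigma_L}\phi\,dx' = \int_{\Omega_L}\partial_3\phi\,dx + (\text{boundary correction using }\phi=0\text{ on }\Sigma_{-L})$ — more precisely, for each fixed $(r,\theta)$ integrating $\partial_3\phi$ along $x_3$ from the bottom — and applying Cauchy–Schwarz, one gets $\left|\int_{\Sigma_L}\phi\,dx'\right|\leq C\sqrt{L}\,|\Omega_L|^{1/2}\|\nabla\phi\|_{L^2(\Omega_L)}$. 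Hence $\frac{m_0}{|\Sigma_L|}\left|\int_{\Sigma_L}\phi\,dx'\right|\leq C m_0 \sqrt{L}\, L^{1/2}\|\nabla\phi\|_{L^2}/1 \sim C m_0 L^{1/2} |\Omega_L|^{1/2}\|\nabla\phi\|_{L^2}/|\Sigma_L|$.

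\textbf{Closing the estimate.} Combining $\mathcal I_L(\phi)\leq\mathcal I_L(\Psi)$ with the two bounds above gives an inequality of the schematic form $\tfrac{1}{C(\epsilon)}X^2 - C m_0 \sqrt{|\Omega_L|}\, X \leq C m_0^2 |\Omega_L|$, where $X=\|\nabla\phi\|_{L^2(\Omega_L)}$ and I have used $|\Omega_L|\sim L$. Solving this quadratic inequality in $X$ yields $X^2\leq C m_0^2 |\Omega_L|$, i.e. $\int_{\Omega_L}|\nabla\phi|^2\,dx\leq C m_0^2 |\Omega_L|$, which upon dividing by $|\Omega_L|$ is exactly \eqref{key_estimae} with $C$ independent of $L$. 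The delicate part throughout is to track every constant and verify that all geometric quantities that appear (lengths, cross-sectional areas, trace constants restricted to the flat ends $\Sigma_{\pm L}$) depend only on the fixed data $f_1,f_2,L_1,L_2,\epsilon$ and \emph{not} on $L$; in particular the Poincar\'e-type step must exploit that $\phi$ vanishes on the entire cross-section $\Sigma_{-L}$ and integrate in the $x_3$-direction only, so that the constant scales like $\sqrt{L}$ rather than $L$, and this $\sqrt L$ loss is precisely absorbed when dividing by $|\Omega_L|\sim L$ at the end.
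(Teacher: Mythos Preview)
Your overall architecture is right --- direct method for existence, then compare $\mathcal I_L(\phi)$ with a simple competitor and bound the trace term --- but the step where you ``for each fixed $(r,\theta)$ integrate $\partial_3\phi$ along $x_3$ from the bottom'' is exactly where the argument breaks. The domain $\Omega_L$ is not a product: the nozzle cross-section $\{r<f_1(\theta,x_3)\}$ varies with $x_3$, and more importantly the obstacle $\Omega'$ sits inside, so for many $(r,\theta)\in\Sigma_L$ the vertical segment from $\Sigma_{-L}$ to $\Sigma_L$ leaves $\Omega$. If you try to salvage this via the divergence theorem, $\int_{\Omega_L}\partial_3\phi\,dx=\int_{\Sigma_L}\phi\,dx'+\int_{\text{lateral}}\phi\,n_3\,ds$, the lateral integral over $\partial\tilde\Omega$ and $\partial\Omega'$ involves $\phi$ itself, and controlling it reintroduces the very $L$-dependent Poincar\'e constant you are trying to avoid. (This is also the source of the inconsistency in your write-up, where the stated bound $C\sqrt{L}\,|\Omega_L|^{1/2}\|\nabla\phi\|_{L^2}$ does not match the $C\sqrt{|\Omega_L|}\,X$ you use when closing.)

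The paper's fix is geometric: it builds an auxiliary domain $U_L\subset\Omega_L$ that coincides with $\Omega_L$ outside a fixed window $|x_3|\le M$, avoids the obstacle inside that window, and still connects $\Sigma_{-L}$ to $\Sigma_L$; then it constructs a $C^{2,\nu}$ diffeomorphism $\mathcal T:U_L\to B_1(0)\times[-L,L]$ that preserves the $x_3$-slices and has derivative bounds \emph{independent of $L$}. On the straight cylinder your integration-in-$x_3$ argument is valid, giving $\bigl|\int_{\Sigma_L}\psi\,dx'\bigr|\le C\int_{\Omega_L}|\nabla\psi|\,dx\le C|\Omega_L|^{1/2}\|\nabla\psi\|_{L^2}$ with $C$ independent of $L$. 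With this trace bound in hand the rest is immediate, and in fact the competitor $\Psi\equiv 0$ (so $\mathcal I_L(0)=0$) already suffices: one gets $\|\nabla\phi\|_{L^2}^2\le C\frac{m_0}{|\Sigma_L|}|\Omega_L|^{1/2}\|\nabla\phi\|_{L^2}$ directly, hence $\dashint_{\Omega_L}|\nabla\phi|^2\le Cm_0^2$. Your linear competitor $b(x_3+L)$ is not wrong, just unnecessary.
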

	\begin{proof}
		Given a fixed positive constant $M>1$, let $U_M$ be a domain satisfying
		 $U_M\subset{\Omega}_M$, $U_M\cap\{x_3=\pm M\}=\Sigma_{\pm M}$ and $\Omega'\cap U_M=\emptyset.$ Define ${U}_L=U_M\cup\big(\Omega_L\setminus\Omega_M\big)$.
		Furthermore, one can choose $U_M$  such that  $\partial U_L\setminus(\Sigma_{-L}\cup\Sigma_{L})$ is $C^{2,\alpha}$. Obviously $ U_L\subset \Omega_L$  (See figure 2). Let $B_1(0)\subset \mathbb{R}^2$ be the unit disk centered at origin. Denote  $\mathscr{C}_L=B_1(0)\times\{-L\leq x_3\leq L\}$.
	It is easy to see that there exists an invertiable $C^{2,\nu}$ map $\mathcal T$: ${U}_L\rightarrow \mathscr{C}_L$, $x\rightarrow y$ satisfying\\
	(i) $\mathcal T(\partial{U}_L)=\partial \mathscr{C}_L$.\\
	(ii) For any $-L\leq k\leq L$, $\mathcal T({U}_L\cap\{x_3=k\})=B_1(0)\times\{y_3=k\}$.\\
	(iii)$ \|\mathcal T\|_{C^{2,\nu}},\ \|\mathcal T^{-1}\|_{C^{2,\nu}}\leq C$.\\
		\begin{figure}[tb]
		\centering	\includegraphics[width=0.8\textwidth]{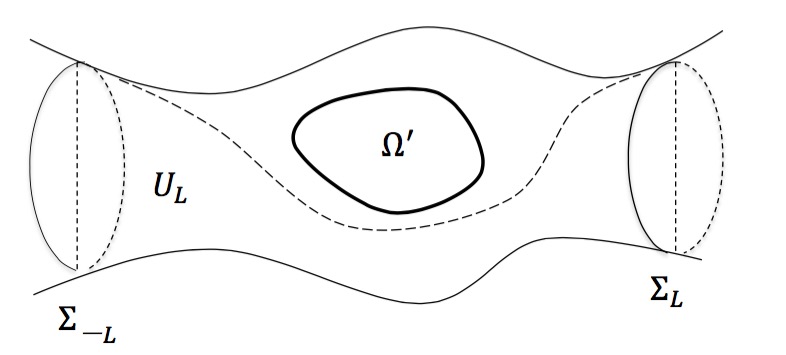}
		\caption {Domain of $U_L$\label{fig:2}}
	\end{figure}It follows from the straightforward computations that
		\begin{equation}\label{S_Lestimate}
		\begin{split}
		 \bigg|\int_{\Sigma_L}\psi dx'\bigg|&\leq C\int_{B_1(0)}|\psi(y',L)| dy'\leq C\int_{B_1(0)}\bigg(\int_{-L}^L|\partial_{y_3}\psi| dy_3\bigg)dy'\\
		 &\leq C\int_{\mathscr{C}_L}|D\psi|dy\leq C\int_{{U}_L}|\nabla\psi|dx\leq C\int_{{\Omega}_L}|\nabla\psi|dx.
		 \end{split}
		\end{equation}
		Applying H$\ddot{\text{o}}$lder inequality yields
		\begin{equation}\label{estimate_on_S_L}
		\bigg|\int_{\Sigma_L}\psi dx'\bigg|\leq C |{\Omega}_L|^{\frac{1}{2}}\|\nabla\psi\|_{L^2(\Omega_L)}.
		\end{equation}
		The constant $C$ here and subsequently in the rest of the paper may change from line to line as long as what these constants depend on is clear.
		Substituting the estimate \eqref{estimate_on_S_L} into \eqref{functional}, one can conclude that
		\begin{equation}\label{coercive}
		\begin{split}
		\mathcal I_L(\psi)&=\int_{{\Omega}_L}F_\epsilon(|\nabla\psi|^2)dx-\frac{m_0}{|\Sigma_{L}|}\int_{\Sigma_{L}}\psi dx'\\
		&\geq\lambda\int_{{\Omega}_L}|\nabla\psi|^2dx-C'\|\nabla\psi\|_{L^2({\Omega}_L)}\\
		&\geq\frac{\lambda}{2}\|\nabla\psi\|_{L^2({\Omega}_L)}^2-\frac{1}{\lambda}C',
		\end{split}
		\end{equation}
		where $C'$ depends on $m_0$, $\overline{\text{S}}$, $\underline{\text{S}}$, and $|{\Omega}_L|$.
		This implies that the functional $\mathcal I_L(\psi)$ is coercive.
	   Hence $\mathcal I_L(\psi)$ has an infimum. Let $\{\phi_k\}$ be a minimizing sequence.
		One has
		\begin{equation}
		\|\nabla\phi_k\|_{ L^2({\Omega}_L)}^2\leq\frac{2}{\lambda}\mathcal I_L(\phi_k)+\frac{2}{\lambda^2}C'\leq\frac{2}{\lambda}\mathcal I_L(0)+\frac{2}{\lambda^2}C'\leq\frac{2}{\lambda^2}C'.
		\end{equation}
		Therefore, there is a subsequence still labeled by $\{\phi_k\}$ such that
		\begin{equation}\label{phi_L^2}
		\phi_k\rightharpoonup\phi\quad\text{in $\mathcal H_L$}\quad\text{and}\quad \phi_k\rightarrow\phi\text{ in } L^2(\Omega_L).
		\end{equation}
		The straightforward computations show that
		$F_{\epsilon}(|{\bf{p}}|^2)$ is a convex function with respect to ${\bf {p}}\in\mathbb{R}^3$. It follows from \cite[Theorem 8.1]{Evans2010Partial}  that 
		\begin{equation}
		\int_{{\Omega}_L}F_\epsilon(|\nabla\phi|^2)\leq\lim_{k\rightarrow\infty}\int_{{\Omega}_L}F_\epsilon(|\nabla\phi_k|^2).
		\end{equation}
		On the other hand, similar to \eqref{S_Lestimate}, the following estimates hold,
		\begin{equation}
		\begin{split}
		\int_{\Sigma_L}(\phi_k-\phi)^2dx&\leq C(L)\int_{{\Omega}_L}\bigg|\nabla\big((\phi_k-\phi)^2\big)\bigg|dx\\[2mm]
		&\leq C(L)\int_{{\Omega}_L}|\phi-\phi_k|\cdot|\nabla\phi_k-\nabla\phi|dx\\[2mm]
		&\leq C(L)\bigg(\int_{{\Omega}_L}|\phi-\phi_k|^2dx\bigg)^{\frac{1}{2}}\bigg(\int_{{\Omega}_L}|\nabla\phi-\nabla\phi_k|^2dx\bigg)^{\frac{1}{2}},
		\end{split}
		\end{equation}
		where $C(L)$ is a constant depending on $L$.
		This, together with \eqref{phi_L^2}, yields
		\begin{equation}
		\lim_{k\rightarrow\infty}\int_{\Sigma_{L}}|\phi_k-\phi|dx'=0.
		\end{equation}
		Therefore,
		\begin{equation}
		\mathcal I_L(\phi)\leq\lim_{k\rightarrow\infty} \mathcal I_L(\phi_k).
		\end{equation}
		Hence $\mathcal I_L$ achieves its minimum at $\phi\in \mathcal H_L$. Furthermore, one has
		\begin{equation}
		\begin{split}
		\|\nabla\phi\|_{L^2(\Omega_L)}^2&\leq\frac{1}{\lambda}\int_{{\Omega}_L}F_\epsilon(|\nabla\phi|^2)dx=\frac{1}{\lambda}\bigg(\mathcal I_L(\phi)+\frac{m_0}{|\Sigma_{L}|}\int_{\Sigma_L}\phi dx'\bigg)\\ &\leq\frac{1}{\lambda}\bigg( \mathcal I_L(0)+\frac{m_0}{|\Sigma_{L}|}\int_{\Sigma_L}\phi dx'\bigg)
		\leq C \frac{m_0}{|\Sigma_{L}|}|{\Omega}_L|^{\frac{1}{2}}\|\nabla\phi\|_{L^2({\Omega}_L)}.
		\end{split}
		\end{equation}
		This implies
		\begin{equation}
		\|\nabla\phi\|_{L^2(\Omega_L)}^2\leq C\frac{m_0^2}{|\Sigma_{L}|^2} |{\Omega}_L|.
		\end{equation}
		Therefore, we have
		\begin{equation}
		\frac{1}{|{\Omega}_L|}\int_{{\Omega}_L}|\nabla\phi|^2dx\leq C\frac{m_0^2}{|\Sigma_{L}|^2}\leq C\frac{m_0^2}{\underline{\text{S}}^2}.
		\end{equation}
		Finally, given any $t\in\mathbb{R}$ and any  function $\psi\in \mathcal H_L$, obviously $\phi+t\psi\in \mathcal H_L$.
		Let $\sigma(t)=\mathcal I_L(\phi+t\psi)$.
		 Since $\sigma(t)$ achieves its minimum at $t=0$, one has $\sigma^\prime(0)=0.$ The straightforward computations give
		\begin{equation}
		0=\sigma'(0)=\int_{{\Omega}_L}H_\epsilon(|\nabla\phi|^2)\nabla\phi\cdot\nabla\psi dx-\frac{m_0}{|\Sigma_{L}|}\int_{\Sigma_{L}}\psi dx'.
		\end{equation}
		This means that $\phi$ is a weak solution of \eqref{truncated_equation}. Hence the proof of the lemma is completed.
		\end{proof}

	From now on, denote  $\Omega(t_1,t_2)={\Omega}\cap\{t_1<x_3<t_2\}$. With Lemma \ref{lemma1}, similar to the proof for  \cite[Proposition 4]{Du2011}, we have the following two propositions.
	\begin{proposition}\label{local_proposition}
		For given $t_1<t_2$, let
			\begin{equation*}
		l_1=\dashint_{\Omega(t_1-1,t_1)}\phi dx\quad\text{and}\quad  l_2=\dashint_{\Omega(t_2,t_2+1)}\phi dx.
		\end{equation*}
		It holds that	\begin{equation}\label{local property}
		|l_2-l_1|\leq C\int_{\Omega(t_1-1,t_2+1)}|\nabla\phi|dx,
		\end{equation}
			where $C$ is a positive constant depending on $\Omega$, independent of $t_1$ and $t_2$.
		\end{proposition}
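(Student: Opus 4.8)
The plan is to prove \eqref{local property} as a Poincar\'e-type inequality via a chaining argument over slabs of unit length, in the spirit of \cite[Proposition 4]{Du2011}. The basic ingredient is the following: for every $a\in\mathbb{R}$, writing $D=\Omega(a,a+2)$, $D_1=\Omega(a,a+1)$ and $D_2=\Omega(a+1,a+2)$, one has
\begin{equation*}
\left|\dashint_{D_1}\phi\,dx-\dashint_{D_2}\phi\,dx\right|\leq C\int_{D}|\nabla\phi|\,dx,
\end{equation*}
with $C$ independent of $a$. This would follow from the Poincar\'e inequality on the connected bounded set $D$: for $i=1,2$,
\begin{equation*}
\left|\dashint_{D_i}\phi\,dx-\dashint_{D}\phi\,dx\right|\leq\frac{1}{|D_i|}\int_{D}\left|\phi-\dashint_{D}\phi\right|dx\leq\frac{C_P(D)}{|D_i|}\int_{D}|\nabla\phi|\,dx,
\end{equation*}
where $C_P(D)$ denotes the Poincar\'e constant of $D$; so it suffices to bound $|D_i|$ from below and $C_P(D)$ from above, uniformly in $a$. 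The lower bound is immediate, since $|D_i|=\int_a^{a+1}|\Sigma_t|\,dt\geq\overline{\text S}>0$.

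The point requiring care is the uniform bound on $C_P(\Omega(a,a+2))$. Each cross section $\Sigma_t$ is connected---a disk-type region $\{r<f_1(\theta,t)\}$ when $t\notin[L_1,L_2]$ and an annulus-type region $\{f_2(\theta,t)\leq r<f_1(\theta,t)\}$ when $t\in[L_1,L_2]$ (with $f_1-f_2\geq 1/C>0$, so the obstacle stays away from the outer wall)---hence every slab $\Omega(a,a+2)$ is connected, has diameter at most $\sqrt{4+4C^2}$ by $f_1\leq C$, and is a Lipschitz domain since $\partial\tilde\Omega$ and $\partial\Omega'$ are $C^{2,\alpha}$. The only feature of $\Omega$ that is not invariant under translation in $x_3$ is the obstacle $\Omega'$, which meets $\Omega(a,a+2)$ only when $a\in[L_1-2,L_2]$; for such $a$ the slabs form a compact family of bounded Lipschitz domains, while for the remaining $a$ the slab $\tilde\Omega\cap\{a<x_3<a+2\}$ is bi-Lipschitz equivalent, with constants independent of $a$, to the fixed cylinder $B_1(0)\times(0,2)$ (for instance by restricting the map $\mathcal T$ of Lemma \ref{lemma1}, which coincides with $\Omega$ far from the obstacle, or by the radial normalization $(r,\theta,x_3)\mapsto(r/f_1(\theta,x_3),\theta,x_3)$). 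In either regime $C_P(\Omega(a,a+2))$ is bounded by a constant depending only on $\Omega$. This verification, together with checking that each slab is genuinely connected in the presence of the obstacle, is the main obstacle; the rest is routine.

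Granting the building block, I would chain as follows. Since $t_1<t_2$, set $m=\lceil t_2-t_1\rceil\geq 1$ and $g_j=\dashint_{\Omega(t_1-1+j,\,t_1+j)}\phi\,dx$ for $j=0,1,\dots,m$, so that $g_0=l_1$ and $t_1-1+m\in[t_2-1,t_2)$, $t_1+m<t_2+1$. The building block gives $|g_{j+1}-g_j|\leq C\int_{\Omega(t_1-1+j,\,t_1+1+j)}|\nabla\phi|\,dx$ for $0\leq j\leq m-1$, and the same estimate applied to the two unit sets $\Omega(t_1-1+m,t_1+m)$ and $\Omega(t_2,t_2+1)$, both contained in the length-$2$ slab $\Omega(t_2-1,t_2+1)$, gives $|g_m-l_2|\leq C\int_{\Omega(t_2-1,t_2+1)}|\nabla\phi|\,dx$. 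Summing the telescoping differences,
\begin{equation*}
|l_2-l_1|\leq\sum_{j=0}^{m-1}|g_{j+1}-g_j|+|g_m-l_2|\leq C\sum_{j=0}^{m-1}\int_{\Omega(t_1-1+j,\,t_1+1+j)}|\nabla\phi|\,dx+C\int_{\Omega(t_2-1,t_2+1)}|\nabla\phi|\,dx.
\end{equation*}
Every domain on the right is contained in $\Omega(t_1-1,t_2+1)$, and a given point of $\Omega(t_1-1,t_2+1)$ belongs to at most three of them (its $x_3$-coordinate lies in at most two of the intervals $(t_1-1+j,\,t_1+1+j)$, plus possibly in $(t_2-1,t_2+1)$). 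Hence the right-hand side is at most $3C\int_{\Omega(t_1-1,t_2+1)}|\nabla\phi|\,dx$, which is exactly \eqref{local property} with a constant independent of $t_1$ and $t_2$.
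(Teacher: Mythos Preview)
Your proof is correct and follows precisely the chaining-via-Poincar\'e argument that the paper defers to in \cite[Proposition~4]{Du2011}; the paper itself does not write out a proof of this proposition. The care you take with the uniform Poincar\'e constant on slabs (compact family of Lipschitz domains near the obstacle, bi-Lipschitz normalization to a fixed cylinder elsewhere) and with the non-integer tail $|g_m-l_2|$ via an overlapping length-$2$ slab are exactly the details that need to be supplied.
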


	\begin{proposition}\label{local_proposition2}
		Let $\phi$ be a solution of the problem \eqref{truncated_equation}. For any $t\in\big(\frac{-L}{4},\frac{L}{4}\big)$, one has
		\begin{equation}\label{local_estimate}
		\dashint_{\Omega(t,t+1)}|\nabla\phi|^2dx<Cm_0^2.
		\end{equation}
		where $C$ is not dependent on $t$.
 	\end{proposition}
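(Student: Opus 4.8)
Looking at Proposition \ref{local_proposition2}, I need to prove a local $L^2$ estimate on the gradient that is uniform in $t$ and independent of $L$ (for $t$ in the middle range). The key tool already available is Lemma \ref{lemma1}, which gives the \emph{averaged} bound $\dashint_{\Omega_L}|\nabla\phi|^2\,dx\le Cm_0^2$, but that is a global average — by itself it does not rule out the gradient concentrating on a short portion of the nozzle. So the real content is a Caccioppoli-type local energy estimate combined with a "hole-filling" or iteration argument that upgrades the global average into a local one.

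The plan is to use the equation \eqref{truncated_equation} together with a cutoff function. Fix $t\in(-L/4,L/4)$ and pick a smooth cutoff $\eta$ depending only on $x_3$ with $\eta\equiv 1$ on $(t,t+1)$, supported in $(t-R,t+1+R)$ for some $R$ to be chosen, with $|\eta'|\le C/R$. Test the weak formulation \eqref{weak_solution} with $\psi=\eta^2(\phi-c)$ where $c$ is a suitable constant (e.g. the average of $\phi$ over $\Omega(t-R,t+1+R)$, chosen so that Poincaré's inequality applies). Note $\psi\in\mathcal H_L$ because $\eta$ vanishes near $\Sigma_{-L}$ and $\Sigma_L$ once $R$ is fixed and $L$ is large relative to $|t|$, so the boundary term from $\Sigma_L$ drops out. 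Using the uniform ellipticity \eqref{coefficient_property} (the bound $\frac1{C(\epsilon)}\le H_\epsilon\le C(\epsilon)$), expanding $\nabla\psi=\eta^2\nabla\phi+2\eta(\phi-c)\nabla\eta$, and absorbing the cross term by Cauchy–Schwarz (Young's inequality with a small parameter), I obtain
\begin{equation*}
\int_{\Omega(t,t+1)}|\nabla\phi|^2\,dx\le \frac{C}{R^2}\int_{\Omega(t-R,t+1+R)}|\phi-c|^2\,dx.
\end{equation*}
Then Poincaré's inequality on $\Omega(t-R,t+1+R)$ — valid since the cross sections are connected and bounded and the nozzle has bounded geometry, with a constant depending on $R$ — converts the right side into $\frac{C(R)}{R^2}\int_{\Omega(t-R,t+1+R)}|\nabla\phi|^2\,dx$.

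The main obstacle is that $\frac{C(R)}{R^2}$ need not be small: the Poincaré constant on a domain of length $2R+1$ typically grows like $R$ (or worse), so a single application does not close the estimate. The fix is to exploit the \emph{global} bound from Lemma \ref{lemma1}: rather than trying to beat the Poincaré constant, I bound $\int_{\Omega(t-R,t+1+R)}|\nabla\phi|^2\,dx\le \int_{\Omega_L}|\nabla\phi|^2\,dx\le Cm_0^2|\Omega_L|$ and choose $R$ as large as the geometry allows while keeping $\Omega(t-R,t+1+R)\subset\Omega_L$. Since $|\Omega_L|\sim L$ and we may take $R$ comparable to $L$, the factor $\frac{C(R)}{R^2}\cdot m_0^2|\Omega_L|$ is of order $\frac{C(R)}{R^2}\cdot m_0^2 L$; with the sharp Poincaré constant $C(R)\sim R^2$ this is $O(m_0^2 L)$, still not bounded. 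So the correct route, following \cite[Proposition 4]{Du2011} as the text indicates, is the iteration/telescoping argument: apply the Caccioppoli estimate with $R=1$ on overlapping slabs to get $a_j:=\int_{\Omega(j,j+1)}|\nabla\phi|^2\le C(a_{j-1}+a_j+a_{j+1})$ is \emph{not} directly useful, so instead one sums the Caccioppoli inequality over a block of $N$ consecutive unit slabs centered at $t$, uses a cutoff of width $N$ so that $|\eta'|\le C/N$, and gets $\sum_{\text{inner}} a_j\le \frac{C}{N^2}\sum_{\text{all}}\int|\phi-c|^2$; bounding the latter via Poincaré slab-by-slab (constant $O(1)$ per unit slab plus Proposition \ref{local_proposition} to control the drift of the averages $l_j$ between slabs) gives $\sum_{\text{inner}} a_j\le \frac{C}{N^2}\big(N\sum_{\text{all}} a_j + (\text{drift terms})\big)$. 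The drift terms are controlled by Proposition \ref{local_proposition} which bounds $|l_{j}-l_{j'}|$ by the integral of $|\nabla\phi|$, and then by Lemma \ref{lemma1}'s average bound these are $O(m_0^2 N)$ in total. Dividing through and using that the average $\frac1N\sum_{\text{all}}a_j\le Cm_0^2$ from Lemma \ref{lemma1}, one concludes $\dashint_{\Omega(t,t+1)}|\nabla\phi|^2\,dx\le Cm_0^2$ with $C$ independent of $t$ and $L$.

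Concretely, the steps I would carry out are: (1) set up the unit-slab notation $a_j=\int_{\Omega(j,j+1)}|\nabla\phi|^2\,dx$ and the averages $l_j=\dashint_{\Omega(j,j+1)}\phi\,dx$; (2) prove the Caccioppoli inequality on a slab of width $N$ by testing with $\psi=\eta^2(\phi-l_{j_0})$ where $j_0$ is the center and $\eta$ is the width-$N$ cutoff, using uniform ellipticity and Young's inequality; (3) apply the Poincaré inequality on each unit sub-slab, writing $\phi-l_{j_0}=(\phi-l_j)+(l_j-l_{j_0})$ and using Proposition \ref{local_proposition} to telescope $|l_j-l_{j_0}|\le C\sum_k\int_{\Omega(k,k+1)}|\nabla\phi|\,dx\le C\sqrt{N}\,(\sum a_k)^{1/2}$ over the $O(N)$ intervening slabs; (4) combine to get $\sum_{|j-j_0|\le N/4}a_j\le \frac{C}{N}\sum_{|j-j_0|\le N}a_j + C m_0^2$, then invoke Lemma \ref{lemma1} (the average bound implies $\sum_{|j-j_0|\le N}a_j\le Cm_0^2 N$ since $|\Omega(j_0-N,j_0+N)|\le Cm_0^2$... wait, $\le CN$, so the sum is $\le Cm_0^2N$); (5) conclude that each individual $a_{j_0}\le Cm_0^2$ and hence \eqref{local_estimate} holds. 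The genuinely delicate point — and the one I would spend the most care on — is step (3)–(4): making sure the Poincaré-constant growth in $N$ is exactly compensated by the $1/N^2$ from the cutoff gradient and the $1/N$ averaging, and that the drift terms $|l_j-l_{j_0}|$ contribute only a bounded amount after invoking Lemma \ref{lemma1}; everything else is routine elliptic estimation.
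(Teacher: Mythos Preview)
Your overall strategy---Caccioppoli estimate with a cutoff, then control the oscillation of $\phi$ via Proposition~\ref{local_proposition} and Lemma~\ref{lemma1}---is the right circle of ideas, but step~(4) does not close with the scaling you claim, and the argument as written is circular.

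Track the drift term carefully. From step~(3) you have $|l_j-l_{j_0}|^2\le CN\sum_{|k-j_0|\le N}a_k$; summing this over the $O(N)$ slabs in $\operatorname{supp}\nabla\eta$ and dividing by $N^2$ (from $|\eta'|^2\le C/N^2$) leaves you with $C\sum_{|k-j_0|\le N}a_k$, \emph{not} $\frac{C}{N}\sum a_k$. So Caccioppoli plus slab Poincar\'e plus Proposition~\ref{local_proposition} only gives the trivial bound $\sum_{\text{inner}}a_j\le C\sum_{\text{all}}a_k$. Your subsequent appeal to Lemma~\ref{lemma1} to assert $\sum_{|j-j_0|\le N}a_j\le Cm_0^2 N$ is exactly the local average bound you are trying to prove; Lemma~\ref{lemma1} only gives $\sum_j a_j\le Cm_0^2 L$ over the \emph{whole} truncated domain, and there is no reason the energy cannot concentrate. (You can also see the inequality $\sum_{\text{inner}}a_j\le \frac{C}{N}\sum_{\text{all}}a_j+Cm_0^2$ fails by testing it against the uniform case $a_j\equiv Cm_0^2$.)

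What the approach in \cite{Du2011} actually uses---and what you can see mirrored in Sections~\ref{3.1}--\ref{3.2} of this paper---is the conservation of mass flux across each cross section: $\int_{\Sigma_{x_3}}H_\epsilon(|\nabla\phi|^2)\partial_3\phi\,dx'=m_0$ for every $x_3$. This is the missing ingredient. When you multiply by a test function of the form $\hat\phi(\zeta-1)$ (with $\hat\phi$ the solution minus a piecewise-linear interpolant of slab averages, and $\zeta$ an exponential weight as in \eqref{zeta}), the dangerous term is $\int H_\epsilon\partial_3\phi\,\hat\phi\,\partial_3\zeta$. You split $\hat\phi$ into its cross-section average plus the fluctuation; the average piece multiplies $\int_{\Sigma}H_\epsilon\partial_3\phi=m_0$ and produces a term of size $Cm_0|s_2-s_1|$, which \emph{is} controllable via Proposition~\ref{local_proposition} and Young's inequality, while the fluctuation is handled by the Poincar\'e inequality on the \emph{cross section} $\Sigma_{x_3}$ (constant independent of the slab length). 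This yields a genuine contraction $e^{\hat\beta}\int_{\Omega(t_1,t_2)}|\nabla\phi|^2\le \int_{\Omega(t_1-1,t_2+1)}|\nabla\phi|^2+Cm_0^2(t_2-t_1)$, which iterates down to \eqref{local_estimate} once combined with the global bound from Lemma~\ref{lemma1}. The essential difference from your plan is cross-section Poincar\'e (fixed constant) in place of slab Poincar\'e (constant growing with $N$), made possible precisely by the flux identity.
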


Since $\phi$ is a weak solution of a quasilinear elliptic equation of divergence form, similar to \cite[Lemmas 6 and 7]{Du2011}, using the Nash-Moser iteration yields that there exists a positive constant $N'<\frac{L}{2}$ such that
\begin{equation}
\|\nabla\phi\|_{L^\infty(\Omega(-K',K'))}\leq Cm_0\quad\text{and}\quad \|\nabla\phi\|_{C^{0,\alpha}(\Omega(-K',K'))}\leq Cm_0\quad \text{for $K'\in(N',\frac{L}{2})$}.
\end{equation}
\subsection{The existence and uniqueness of the subsonic flows}\label{existence}

Now, we are in position to prove the existence and uniqueness of the subsonic solution in the whole domain $\Omega$ and remove the coefficients truncations \eqref{densitytruncation} for the equation \eqref{truncated_equation}.
\begin{lemma}\label{thm1} There exists a critical value $\hat m>0$ such that if $m_0<\hat m$, then there exists a unique subsonic solution of \eqref{Potentialequation}. Moreover, $\max\limits_{x\in\Omega}|\nabla\phi|$ is a continuous function of $m_0$.
	\end{lemma}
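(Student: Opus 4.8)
The plan is to pass to the limit $L\to\infty$ in the truncated problems of the previous subsection, then remove the coefficient truncation by a continuity/bootstrap argument in $\epsilon$ and $m_0$. First I would fix $\epsilon>0$ and use the uniform interior estimates already established, namely $\|\nabla\phi_L\|_{L^\infty(\Omega(-K',K'))}\le Cm_0$ and $\|\nabla\phi_L\|_{C^{0,\alpha}(\Omega(-K',K'))}\le Cm_0$, with $C$ independent of $L$. By a diagonal argument over an exhaustion $K'\to\infty$ together with Arzel\`a--Ascoli, a subsequence of $\{\phi_L\}$ (normalized, say, so that $\dashint_{\Omega(0,1)}\phi_L\,dx=0$, which is legitimate since the equation and boundary conditions only involve $\nabla\phi_L$) converges in $C^{1}_{loc}(\overline\Omega)$ to a limit $\phi$ solving $\Div(H_\epsilon(|\nabla\phi|^2)\nabla\phi)=0$ in $\Omega$ with $\partial\phi/\partial\mathbf n=0$ on $\partial\Omega$, and satisfying $\|\nabla\phi\|_{C^{1,\alpha}(\Omega)}\le Cm_0$ after one more round of Schauder estimates applied to the now-limiting uniformly elliptic equation. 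The flux identity $\int_{\Sigma_t}H_\epsilon(|\nabla\phi|^2)\partial_3\phi\,ds=m_0$ passes to the limit because for the truncated problem the quantity $\int_{\Sigma_t}H_\epsilon(|\nabla\phi_L|^2)\partial_3\phi_L\,ds$ is independent of $t\in(-L,L)$ (integrate the equation over $\Omega(t,L)$ and use the slip condition and the Neumann data on $\Sigma_L$), so it equals $m_0$ there; local $C^1$ convergence then yields the same for $\phi$.

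Next I would remove the truncation. The point is that the a priori bound $\|\nabla\phi\|_{L^\infty(\Omega)}\le Cm_0$, with $C$ independent of $\epsilon$, is exactly what is needed: choose $\hat m_1>0$ so that $Cm_0<\sqrt{1-2\epsilon_0}$ for $m_0<\hat m_1$ and some fixed small $\epsilon_0$; then for $\epsilon\le\epsilon_0$ one has $|\nabla\phi|^2<1-2\epsilon$ everywhere, so $H_\epsilon(|\nabla\phi|^2)=\rho(|\nabla\phi|^2)$ and $\phi$ is in fact a genuine solution of \eqref{Potentialequation} with $Q(m_0)=\sup_{\overline\Omega}|\nabla\phi|<1$. (The estimate $\|\nabla\phi\|_{C^{1,\alpha}(\Omega)}\le Cm_0$ of Theorem \ref{theorem_1}(i) comes along for free from the Schauder step.) To get the sharp critical value $\hat m$ and the statement that $\max_{\overline\Omega}|\nabla\phi|$ depends continuously on $m_0$, I would run the standard continuity argument: let $\hat m$ be the supremum of those $\mu>0$ such that for all $m_0\in[0,\mu)$ there is a subsonic solution with $\sup_{\overline\Omega}|\nabla\phi|<1$; show the set of such $m_0$ is open (implicit function theorem / Schauder linearization at a uniformly subsonic solution, the linearized operator being uniformly elliptic and invertible on the appropriate weighted/periodic-in-nothing space — here the uniform interior estimates and the already-proved structure keep the argument on a bounded-in-$\nabla\phi$ set) and that along any sequence $m_0^{(n)}\uparrow m_0^*<\hat m$ the solutions converge in $C^1_{loc}$ to a subsonic solution at $m_0^*$, giving closedness as well; uniqueness (below) then makes $\max|\nabla\phi|$ a well-defined function of $m_0$, and the same compactness plus uniqueness argument shows it is continuous.

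For uniqueness, suppose $\phi_1,\phi_2$ are two uniformly subsonic solutions with the same flux $m_0$. Both have $\|\nabla\phi_i\|_{L^\infty}\le Cm_0<1$, so they solve the same uniformly elliptic equation $\Div(A(\nabla\phi_i))=0$ with $A(\mathbf p)=\rho(|\mathbf p|^2)\mathbf p$; writing $w=\phi_1-\phi_2$, $\nabla w$ solves a linear uniformly elliptic equation in divergence form with bounded measurable coefficients, homogeneous Neumann data on $\partial\Omega$, zero net flux through each $\Sigma_t$, and (from Theorem \ref{theorem_1}(i) applied to each $\phi_i$) $\nabla w\in L^2(\Omega)$. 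Testing this equation against $w$ itself is not immediately allowed since $w$ need not be in $L^2(\Omega)$; instead I would test against a cutoff $w\chi_R^2$ with $\chi_R$ supported in $|x_3|<2R$ and equal to $1$ on $|x_3|<R$, use the uniform ellipticity to absorb the good term, and control the error $\int \mathbf a\cdot\nabla w\, w\,\chi_R\nabla\chi_R$ by a Caccioppoli-type bound $\int_{\Omega(R,2R)}|\nabla w|^2\le C$ (uniform in $R$, from the decay/convergence information) together with a Poincar\'e inequality on each slab $\Omega(R,2R)$ adjusted by the average of $w$; letting $R\to\infty$ forces $\nabla w\equiv 0$, hence $\phi_1\equiv\phi_2$ up to a constant, so the velocity fields coincide. \emph{The main obstacle} I anticipate is precisely this last point: because $\phi$ is not bounded in $L^\infty(\Omega)$ (it grows linearly along the nozzle), neither the uniqueness energy argument nor the openness step in the continuity method can be done by a naive global integration by parts, and one must consistently work with gradients, local averages, and the flux normalization — exactly the difficulty flagged in the introduction — which in turn leans on the average estimate of Lemma \ref{lemma1} and Propositions \ref{local_proposition}--\ref{local_proposition2} being uniform in $L$.
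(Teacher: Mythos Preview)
Your existence argument---diagonal limit $L\to\infty$ followed by removal of the coefficient truncation once $Cm_0$ is small---is essentially the paper's; the paper normalizes by subtracting a point value rather than a slab average, and is silent on why the flux condition survives the limit, so your treatment is if anything more careful there. Where you diverge is in the continuity/critical-value step: you propose an implicit-function-theorem argument to show the admissible set of $m_0$ is open, whereas the paper takes the lighter route of combining the uniform $C^{1,\alpha}$ bound with uniqueness: for any $m_j\to m$ the gradients $\nabla\phi^j$ are precompact in $C^{0,\alpha'}_{loc}$, uniqueness identifies the limit, and continuity of $Q(m_0)=\sup|\nabla\phi|$ follows; $\hat m$ is then read off from a monotone exhaustion $r_n\uparrow 1$. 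Your IFT route would require setting up a function space on the unbounded $\Omega$ in which the linearization is invertible, which is nontrivial and unnecessary here.

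Your uniqueness sketch has a genuine gap. You claim the cross term $\int a_{ij}\partial_j w\,w\,\chi_R\partial_i\chi_R$ is controlled via a bound $\int_{\Omega(R,2R)}|\nabla w|^2\le C$ uniform in $R$, ``from the decay/convergence information''; but no decay is available at this point (Theorem~\ref{convergence_thm} is proved later and under extra hypotheses on the nozzle), and Proposition~\ref{local_proposition2} only yields $\int_{\Omega(R,2R)}|\nabla w|^2\le CR$. Moreover, the Poincar\'e inequality on the \emph{slab} $\Omega(R,2R)$ has constant of order $R$, so together with $|\nabla\chi_R|\sim 1/R$ the error is of size $\int_{\Omega(R,2R)}|\nabla w|^2$, which need not vanish. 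The fix---and this is what the paper, following \cite{Du2011}, does in the analogous computation leading to \eqref{Phit1t2}---is to subtract the \emph{cross-sectional} mean of $w$ and use the Poincar\'e inequality \eqref{Poincare_inequality} on each $\Sigma_t$; the zero-flux identity $\int_{\Sigma_t}a_{i3}\partial_i w\,dx'=0$ makes this subtraction free, and the error becomes $(C/R)\int_{\Omega(R,2R)}|\nabla w|^2\le Cm_0^2$. Bootstrapping (first conclude $\nabla w\in L^2(\Omega)$, then let the tail vanish) then gives $\nabla w\equiv 0$.
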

\begin{proof}
For any  fixed $\hat{x}\in{\Omega}$, choose $L$ large enough such that $\hat{x}\in{\Omega}(\frac{-L}{4},\frac{L}{4})$.
Let $\phi_L$ be the solution of the truncated problem \eqref{truncated_equation}, and denote $\hat\phi_L=\phi_{L}-\phi_L(\hat{x})$. Obviously, $\hat\phi_L$ also satisfies \eqref{truncated_equation}, then
\begin{equation}\label{est67}
\|\nabla\hat\phi_L\|_{L^\infty({\Omega(-K',K')})}\leq Cm_0\quad \text{and} \quad \|\nabla\hat\phi_L\|_{C^{0,\alpha}({\Omega}(-K',K'))}\leq Cm_0.
\end{equation}
Therefore, by the diagonal procedure, there exists a subsequence $\{\hat\phi_{L_n}\}$ and a function $\phi\in C^{1,\alpha}_{loc}({\Omega})$ such that
\begin{equation*}
\lim_{n\rightarrow\infty}\|\hat\phi_{L_n}-\phi\|_{C^{1,\delta}({\Omega}{(-K',K')})}=0 \quad\text{with $\delta<\alpha$}.
\end{equation*}
Furthermore,
 $\phi$ is a strong solution to
\begin{equation}
\begin{cases}
\big(H_\epsilon(|\nabla\phi|^2)\delta_{ij}+2H_\epsilon'(|\nabla\phi|^2)\partial_i\phi\partial_j\phi\big)\partial_{ij}^2\phi=0&\text{ in }{\Omega},\\
\frac{\partial\phi}{\partial\textbf{n}}=0 &\text{ on }\partial\Omega.
\end{cases}
\end{equation}
Similar to the proof of \cite[Lemmas 6 and 7]{ Du2011}, one gets
\begin{equation} \phi\in C^{2,\alpha}_{loc}({\Omega})\quad\text{and}\quad
\|\nabla\phi\|_{C^{1,\alpha}({\Omega})}\leq Cm_0.
\end{equation}
Choosing $m_0$ small enough such that $Cm_0\leq1-\epsilon$, then we have
\begin{equation*}
H_\epsilon(|\nabla\phi|^2)=\rho(|\nabla\phi|^2).
\end{equation*}
Hence, $\phi$ indeed solves the problem \eqref{Potentialequation}.

Next, for  the uniqueness of the uniformly subsonic solution, one may refer to \cite{Du2011} for the proof.

Finally, we show that $\max\limits_{\Omega}|\nabla\phi|$ depends on $m_0$ continuously. Let $\{m_j\}$ be a sequence satisfying $m_j \uparrow m$ and $\phi^j$ be the unique subsonic solution of \eqref{Potentialequation} with mass flux $m_j$. Then the Areza-Ascoli Theorem leads to that for some $\alpha'<\alpha$, one has
\begin{equation}
\nabla\phi^j\rightarrow\nabla\phi_0\quad\text{in } C^{0,\alpha'}(\Omega),
\end{equation}
where $\phi_0$ is the solution of \eqref{Potentialequation} with mass flux $m$. One can conclude that for this convergence $\max\limits_{x\in\Omega}|\nabla\phi|$ is a continuous function of $m_0$.

	Let $\{r_i\}_{i=1}^{\infty}$ be a strictly increasing sequence satisfying $\lim\limits_{i\rightarrow\infty}r_i=1$. Because of the continuity of $Q(m)$,  there exists the largest $R_n>0$ such that
	\begin{equation}
	0<Q(m)<r_n\text{ for any } m\in(0,R_n).
	\end{equation}
	Obviously $R_{n+1}\geq R_n$.
	Moreover, 
	\begin{equation}
	R_n=\int_{{\Sigma_t}}\rho(|\nabla\phi|^2)\frac{\partial\phi}{\partial\textbf{l}}ds\leq|{{\Sigma_t}}|\rho(Q^2(R_n))Q(R_n).
	\end{equation}
	Hence $\{R_n\}$ is bounded. Set $\lim\limits_{n\rightarrow\infty}R_n=\hat{m}$. Therefore, for any $m_0<\hat{m}$, there exists an $n$ such that $m_0<R_n$, $Q(m_0)<r_n<1$.
	Moreover, for any $\bar Q\in(0,1)$, there exists an $n$ such that $\bar Q\in(0,r_n)$. Using the continuity of $Q(m)$ again yields that there exists an $m_0\in(0,R_n)$ satisfying $Q(m_0)=\bar Q$.
\end{proof}

Theorem \ref{theorem_1} is the direct consequence in Section \ref{existence}, Lemma \ref{thm1}.

\section{Convergence rates at far fields}

It follows from the study in \cite{Du2011} that the uniformly subsonic solution in Theorem \ref{theorem_1} tends to an uniform state at far fields if the nozzle tends to be a straight one.
In this section, we investigate the convergence rates of uniform subsonic flows at far fields and prove Theorem \ref{convergence_thm}.
Let $\phi_1(x)$ be the uniformly subsonic solution of
\begin{equation}\label{Potentialeq}
\begin{cases}
\Div(\rho(|\nabla\phi_1|^2)\nabla\phi_1)=0    &\text{in }\Omega, \\[2mm]
\frac{\partial\phi_1}{\partial \mathbf n}=0 &\text{on }\partial\Omega,\\[2mm]
\int_{\Sigma_t}\rho(|\nabla\phi_1|^2)\frac{\partial\phi_1}{\partial \mathbf l}ds=m_0
\end{cases}
\end{equation}
obtained in Theorem \ref{theorem_1} which satisfies
\begin{equation}\label{phi_property}
\|\nabla\phi_1\|_{C^{1,\alpha}(\Omega)}\leq Cm_0.
\end{equation}

 The basic idea is to establish the local energy decay via weighted energy estimates, which is the core part to get the convergence rates. The pointwise convergence rates is proved by the Nash-Moser iteration. The whole proof is divided into three sections. We start with the simple case where the nozzle boundary is straight when $x_3>K$.\\
\subsection{Energy estimates for the boundary is straight at far fields}\label{3.1}
Assume $f_1=\bar f$ for $x_3>K$ with some positive constant $K$. In this case, one has $n_3=0$ for $\partial\Omega\cap\{x_3>K\}$. Let $\phi_1$ be the solution of \eqref{Potentialeq} and  $\phi_2=\bar qx_3$. Obviously, $\phi_2$ satisfies
\begin{equation}
\begin{cases}
\Div(\rho(|\nabla\phi_2|^2)\nabla\phi_2)=0 & \text{in } \Omega\cap \{x_3>K\}, \\
\frac{\partial\phi_2}{\partial\mathbf n}=0& \text{on}\ \partial\Omega\cap \{x_3>K\}.
\end{cases}
\end{equation}
Denote $\Phi=\phi_1-\phi_2$. Then $\Phi$ satisfies
\begin{equation}\label{uniquenesseq}
\begin{cases}
\partial_i(\mathfrak a_{ij}\partial_j\Phi)=0&\quad \text{in } \Omega\cap \{x_3>K\},\\
\frac{\partial\Phi}{\partial{\bf n}}=0&\quad \text{on}\ \partial\Omega\cap \{x_3>K\},
\end{cases}
\end{equation}
where 
\begin{equation}\label{convergence_a_coefficient}
\mathfrak a_{ij}=\int_0^1 \rho(\hat{q}^2)\delta_{ij}+2\rho'(\hat{q}^2)(s\partial_j\phi_1+(1-s)\partial_j\phi_2)(s\partial_i\phi_1+(1-s)\partial_i\phi_2)ds
\end{equation}
with
\begin{equation}\label{convergence_q_coefficient}
\hat{q}^2=|s\nabla\phi_1+(1-s)\nabla\phi_2|^2.
\end{equation}

The straightforward computations show that there exist two constants $\lambda$ and $\Lambda$ such that
\begin{equation}\label{aij_elliptic}
\lambda|\xi|^2\leq\mathfrak a_{ij}\xi_i\xi_j\leq\Lambda|\xi|^2\quad\text{for $\xi\in\mathbb{R}^3$}.
\end{equation}
Moreover, one can increase $\Lambda$ so that the following Poincar$\acute{e}$ inequality holds on each cross section,
\begin{equation}\label{Poincare_inequality}
\bigg\|\mathcal Z-\dashint_{{\Sigma}_t}\mathcal Z ds\bigg\|_{L^2({\Sigma}_t)}\leq\Lambda\|\nabla \mathcal Z\|_{L^2({\Sigma}_t)}\quad\text{for any $\mathcal Z\in H^2_{loc}(\Omega(t-\epsilon,t+\epsilon))$.}
\end{equation}

For any $t_1<t_2$, $h$ and $\beta$ are constants to be determined later. Denote
\begin{equation}\label{zeta}
\zeta(x_3;t_1,t_2,,\beta,h)=
\begin{cases}
1  &  x_3\leq t_1-h,\\
e^{\beta(x_3-t_1+h)} &   t_1-h< x_3\leq t_1,\\
e^{\beta h} & t_1< x_3\leq t_2,\\
e^{\beta h}\cdot e^{-\beta(x_3-t_2)}  &  t_2<x_3\leq t_2+h,\\
1 & x_3 >t_2+h.
\end{cases}
\end{equation}
  Multiplying $\Phi(\zeta(x_3,t_1,t_2,\beta,h)-1)$ on both sides of \eqref{uniquenesseq} and taking integral on $\Omega(t_1-h,t_2+h)$ yield
\begin{equation}
\begin{split}
&\quad\int_{\Omega(t_1-h,t_2+h)}\mathfrak a_{ij}\partial_i\Phi\partial_j\Phi(\zeta-1)dx+\int_{\Omega(t_1-h,t_2+h)}\mathfrak a_{i3}\partial_i\Phi\Phi\partial_3\zeta dx\\&=\int_{\partial\Omega\cap\overline{\Omega(t_1-h,t_2+h)}}\mathfrak a_{ij}\partial_j\Phi\Phi(\zeta-1)n_ids.
\end{split}
\end{equation}  
For the boundary term, one has
\begin{equation}\label{unique boundary}
\mathfrak a_{ij}\partial_j\Phi n_i=\big(\rho(|\nabla\phi_1|^2)\partial_i\phi_1-\rho(|\nabla\phi_2|^2)\partial_i\phi_2\big)\cdot n_i=0.
\end{equation}
Moreover, the conserved mass flux on each cross section implies 
\begin{equation}\label{unique cross section}
\int_{\Sigma_t}\mathfrak a_{i3}\partial_i\Phi dx'=\int_{\Sigma_t}\rho(|\nabla\phi_1|^2)\partial_3\phi_1-\rho(|\nabla\phi_2|^2)\partial_3\phi_2 dx'=0.
\end{equation}
Set $\tilde\eta(t)=\int_{\Sigma_t}\Phi dx'$.
Combining \eqref{unique boundary} and \eqref{unique cross section} yields that
\begin{equation}\begin{split}
&\quad\lambda\int_{\Omega(t_1-h,t_2+h)}|\nabla\Phi|^2(\zeta-1)dx\\
&\leq-\int\limits_{[t_1-h,t_1]\cup[t_2,t_2+h]}\bigg(\frac{\tilde\eta(x_3)}{|\Sigma_{x_3}|}\partial_3\zeta\int_{\Sigma_{x_3}}\mathfrak a_{i3}\partial_i\Phi dx'\bigg)dx_3-\int_{\Omega(t_1-h,t_1)\cup\Omega(t_2,t_2+h)}\mathfrak a_{i3}\partial_i\Phi\bigg(\Phi-\frac{\tilde\eta(x_3)}{|\Sigma_{x_3}|}\bigg)\partial_3\zeta dx\\
&\leq \bigg(\int_{\Omega(t_1-h,t_1)\cup\Omega(t_2,t_2+h)}\bigg(\Phi-\frac{\tilde\eta(x_3)}{|\Sigma_{x_3}|}\bigg)^2(\partial_3\zeta)^2\zeta^{-1}dx\bigg)^{\frac{1}{2}}\bigg(\int_{\Omega(t_1-h,t_1)\cup\Omega(t_2,t_2+h)}(\mathfrak a_{i3}\partial_i\Phi)^2\zeta dx\bigg)^{\frac{1}{2}}.
\end{split}\end{equation}
It follows from \eqref{Poincare_inequality} that 
\begin{equation}\begin{split}
&\quad\bigg(\int_{\Omega(t_1-h,t_1)\cup\Omega(t_2,t_2+h)}\bigg(\Phi-\frac{\tilde\eta(x_3)}{|\Sigma_{x_3}|}\bigg)^2(\partial_3\zeta)^2\zeta^{-1}dx\bigg)^{\frac{1}{2}}\\
&=\bigg\{\int_{t_1-h}^{t_1}+\int_{t_2}^{t_2+h}(\partial_3\zeta)^2\zeta^{-1}\bigg[\int_{\Sigma_{x_3}}\bigg(\Phi-\frac{\tilde\eta(x_3)}{|\Sigma_{x_3}|}\bigg)^2dx'\bigg]dx_3\bigg\}^\frac{1}{2}\\
&\leq\bigg[\int_{t_1-h}^{t_1}+\int_{t_2}^{t_2+h}\Lambda^2(\partial_3\zeta)^2\zeta^{-1}\bigg(\int_{\Sigma_{x_3}}|\nabla\Phi|^2dx'\bigg)dx_3\bigg]^\frac{1}{2}\\
&\leq\bigg(\int_{\Omega(t_1-h,t_1)\cup\Omega(t_2,t_2+h)}\Lambda^2(\partial_3\zeta)^2\zeta^{-1}|\nabla\Phi|^2dx\bigg)^{\frac{1}{2}}.
\end{split}\end{equation}
Note that $\partial_3\zeta=\beta\zeta$ for $x_3\in[t_1-h,t_1]\cup[t_2,t_2+h]$, then
\begin{equation}\label{Phit1t2}
\lambda\int_{\Omega(t_1-h,t_2+h)}|\nabla\Phi|^2(\zeta-1)dx\leq\Lambda^2\beta\int_{\Omega(t_1-h,t_1)\cup\Omega(t_2,t_2+h)}|\nabla\Phi|^2\zeta dx.
\end{equation}
Set $\beta=\frac{\lambda}{\Lambda^2 }$, then we have the following estimate
\begin{equation}
e^{\beta h}\int_{\Omega(t_1,t_2)}|\nabla\Phi|^2dx\leq\int_{\Omega(t_1-h,t_2+h)}|\nabla\Phi|^2dx.
\end{equation}
Taking $t_1=T$, $t_2=T+1$ and $h=\frac{T}{2}$ yields
\begin{equation}\label{cylinder_energy_estimate} e^{\beta T}\int_{\Omega(T,T+1)}|\nabla\Phi|^2dx\leq  \int_{\Omega(\frac{T}{2},\frac{3T}{2}+1)} |\nabla\Phi|^2dx\leq C(T+1).
\end{equation}
Thus, there must be a positive constant $\bar\alpha$ such that
\begin{equation}\label{cylinder_L2}
\int_{\Omega(T,T+1)}|\nabla\Psi|^2dx\leq\frac{1}{e^{\bar\alpha T}}.
\end{equation}
\subsection{ Energy estimates for the boundary has the algebraic convergence}\label{3.2} When the boundary satisfies \eqref{boundary_algebratic_rate}, the unit outer normal direction of the boundary can be written as
\begin{equation*}
{\bf{n}}=(n_1,n_2,n_3)=\frac{1}{\sqrt{G}}\bigg(\cos\theta+\frac{\partial f_1}{\partial\theta}\frac{\sin\theta}{r}, \sin\theta-\frac{\partial f_1}{\partial\theta}\frac{\cos\theta}{r},-\frac{\partial f_1}{\partial x_3}\bigg),
\end{equation*}
where \begin{equation*}G=1+\bigg(\frac{\partial f_1}{\partial \theta}\bigg)^2\frac{1}{r^2}+\bigg(\frac{\partial f_1}{\partial x_3}\bigg)^2.\end{equation*}
Let $\phi_1$ be the solution of \eqref{Potentialeq} and  $\phi_2=\bar qx_3$. 
 Obviously, $\phi_2$ satisfies
\begin{equation}
\begin{cases}
\Div(\rho(|\nabla\phi_2|^2)\nabla\phi_2)=0 & \text{in } \Omega\cap \{x_3>K\}, \\
\frac{\partial\phi_2}{\partial\mathbf n}=\bar qn_3& \text{on}\ \partial\Omega\cap \{x_3>K\}.
\end{cases}
\end{equation}

Denote $\Psi=\phi_1-\phi_2$. It is easy to check that $\Psi$ satisfies
\begin{equation}\label{cylinderPsieq}
\begin{cases}
\partial_i(a_{ij}\partial_j\Psi)=0 &\text{in} \ \Omega\cap \{x_3>K\}, \\
\frac{\partial\Psi}{\partial \mathbf n}=-\bar q n_3  &\text{on}\ \partial\Omega\cap \{x_3>K\},
\end{cases}
\end{equation}
where $a_{ij}$ is same as in $\mathfrak a_{ij}$
and satisfies \eqref{aij_elliptic}.
In fact, $\Psi$ also satisfies
\begin{equation}\label{est103.5}
 a_{ij}\partial_j\Psi n_i=-\rho(\bar q^2)\bar q n_3\quad\text{on}\ \partial\Omega\cap \{x_3>K\}.
 \end{equation}
On the boundary $\partial\Omega\cap\{x_3>K\}$, it is easy to check that
\begin{equation}\label{bdyterm}
|a_{ij}\partial_j\Psi n_i|=\big|(\rho(|\nabla\phi_1|^2)\partial_i\phi_1-\rho(|\nabla\phi_2|^2)\partial_i\phi_2) n_i\big|=\big|\rho(\bar q^2)\bar qn_3\big|\leq\frac{C}{x_3^{l+1}}.
\end{equation}

On the cross section $\Sigma_{x_3}$, one has
\begin{equation}\label{generalcrosssection}
\begin{split}
&\quad\bigg|\int_{\Sigma_{x_3}}a_{i3}\partial_i\Psi dx'\bigg|=\bigg|\int_{\Sigma_{x_3}}\rho(|\nabla\phi_1|^2)\partial_3\phi_1-\rho(|\nabla\phi_2|^2)\partial_3\phi_2dx'\bigg|\\
&=\bigg|m_0-\rho(\bar q^2)\bar q|\Sigma_{x_3}|\bigg|=\bigg|\rho(\bar q^2)\bar q(|\overline\Sigma|-|\Sigma_{x_3}|)\bigg|\\
&=\bigg|\rho(\bar q^2)\bar q\pi (f_1^2-\bar f^2)\bigg|\leq \frac{C}{x_3^l}.
\end{split}
\end{equation}

Let $\bar K$ be a positive integer to be determined later. Choose $t_1=T$ and $t_2=t_1+\bar  K$.
Denote
\begin{equation*}
 s_1=\dashint_{\Omega(t_1-1,t_1)}\Psi dx\quad \text{and}\quad s_2=\dashint_{\Omega(t_2,t_2+1)}\Psi dx.
\end{equation*}
Define \begin{equation*}
\hat{\Psi}(x;t_1,t_2,s_1,s_2)=\left\{
\begin{array}{llr}
\Psi(x)-s_1,  & {x_3<t_1,}\\
\Psi(x)-s_1-\frac{s_2-s_1}{t_2-t_1}(x_3-t_1), & {t_1\leq x_3\leq t_2,}\\
\Psi(x)-s_2,  & {x_3>t_2}.
\end{array} \right.
\end{equation*}
Let $\zeta$ be defined in \eqref{zeta} and $h=1$. Multiplying $\hat\Psi\big(\zeta(x_3;t_1,t_2,\hat\beta,1)-1\big)$ on both sides of the equation \eqref{cylinderPsieq} and integrating on $\Omega(t_1-1,t_2+1)$ yield
\begin{equation}\begin{split}
&\quad\int_{\Omega(t_1-1,t_2+1)}a_{ij}\partial_i\Psi\partial_j\Psi(\zeta-1)dx+\int_{\Omega(t_1,t_2)}a_{i3}\partial_i\Psi\frac{s_1-s_2}{t_2-t_1}(\zeta-1 )dx\\&=-\int_{\Omega(t_1-1,t_2+1)}a_{i3}\partial_i\Psi\hat\Psi\zeta_{x_3}dx+\int_{\partial\Omega\cap\overline{\Omega(t_1-1,t_2+1)}}\hat\Psi(\zeta-1)a_{ij}\partial_j\Psi n_ids,
\end{split}
\end{equation}
where $\zeta-1=0$ at $x_3=t_1-h$ and $x_3=t_2+h$ is used.
Thus
\begin{equation*}
\begin{split}
&\lambda\int_{\Omega(t_1-1,t_2+1)}(\zeta-1)|\nabla\Psi|^2dx\leq\bigg|\frac{s_2-s_1}{t_2-t_1}\int_{\Omega(t_1,t_2)}a_{i3}\partial_i\Psi(\zeta-1 )dx\bigg|\\[2mm] \quad&-\int_{\Omega(t_1-1,t_1)\cup\Omega(t_2,t_2+1)}a_{i3}\partial_i\Psi\hat\Psi\zeta_{x_3}dx+\int_{\partial\Omega\cap\overline{\Omega(t_1-1,t_2+1)}}\hat\Psi(\zeta-1)a_{ij}\partial_j\Psi n_ids.
\end{split}
\end{equation*}

Set \begin{equation*}\eta_1(x_3)=\int_{\Sigma_{x_3}}(\Psi-s_1) dx'\quad\text{and}\quad \eta_2(x_3)=\int_{\Sigma_{x_3}}(\Psi-s_2) dx'.\end{equation*} One has
\begin{equation}\label{I_i}\begin{split}
&\quad\lambda\int_{\Omega(t_1-1,t_2+1)}(\zeta-1)|\nabla\Psi|^2dx\\&\leq-\int_{\Omega(t_1-1,t_1)}a_{i3}\partial_i\Psi\bigg(\hat\Psi-\frac{\eta_1(x_3)}{|\Sigma_{x_3}|}\bigg)\zeta_{x_3}dx-\int_{\Omega(t_2,t_2+1)}a_{i3}\partial_i\Psi\bigg(\hat\Psi-\frac{\eta_2(x_3)}{|\Sigma_{x_3}|}\bigg)\zeta_{x_3}dx\\[2mm] &\quad+\bigg|\frac{s_2-s_1}{t_2-t_1}\int_{\Omega(t_1,t_2)}a_{i3}\partial_i\Psi(\zeta-1 )dx\bigg|-\int_{\Omega(t_1-1,t_1)}a_{i3}\partial_i\Psi\frac{\eta_1(x_3)}{|\Sigma_{x_3}|}\zeta_{x_3}dx\\[2mm]
&\quad-\int_{\Omega(t_2,t_2+1)}a_{i3}\partial_i\Psi\frac{\eta_2(x_3)}{|\Sigma_{x_3}|}\zeta_{x_3}dx+\int_{\partial\Omega\cap\overline{\Omega(t_1-1,t_2+1)}}\hat\Psi(\zeta-1)a_{ij}\partial_j\Psi n_ids=\sum\limits_{i=1}^6 I_i.
\end{split}
\end{equation}

We estimate $I_i\  (i=1,\cdots,6)$ one by one.
Applying H$\ddot{\text{o}}$lder inequality to $I_1$ gives
\begin{equation}\begin{split}
|I_1|&\leq\bigg{[}\int_{\Omega(t_1-1,t_1)}(a_{i3}\partial_i\Psi)^2\zeta dx\bigg{]}^\frac{1}{2}
\bigg{[}\int_{\Omega(t_1-1,t_1)}\bigg(\hat\Psi-\frac{\eta_1(x_3)}{|\Sigma_{x_3}|}\bigg)^2\zeta_{x_3}^{2}\zeta^{-1} dx\bigg{]}^\frac{1}{2}\\
&\leq\bigg{[}\int_{\Omega(t_1-1,t_1)}(a_{i3}\partial_i\Psi)^2\zeta dx\bigg{]}^\frac{1}{2}\bigg{[}\int_{t_1-1}^{t_1}\int_{\Sigma_{x_3}}\bigg(\hat\Psi-\frac{\eta_1(x_3)}{|\Sigma_{x_3}|}\bigg)^2dx'\zeta_{x_3}^2\zeta^{-1}dx_3\bigg{]}^{\frac{1}{2}}\\
&\leq\bigg{[}\int_{\Omega(t_1-1,t_1)}\Lambda^2|\nabla\Psi|^2\zeta dx\bigg{]}^\frac{1}{2}\bigg{[}\int_{\Omega(t_1-1,t_1)}\Lambda^2\zeta_{x_3}^2\zeta^{-1}|\nabla\hat\Psi|^2dx\bigg{]}^\frac{1}{2}\\
&\leq\Lambda^2\hat\beta\int_{\Omega(t_1-1,t_1)}|\nabla\Psi|^2\zeta dx,
\end{split}\end{equation}
where the third inequality follows from the Poincar$\acute{e}$ inequality \eqref{Poincare_inequality}.
Similarly, one has
\begin{equation}
	|I_2|\leq\Lambda^2\hat\beta\int_{\Omega(t_2,t_2+1)}|\nabla\Psi|^2\zeta dx.
	\end{equation}
Taking $\hat\beta=\frac{\lambda}{\Lambda^2}$ yields
\begin{equation}
\begin{split}
\int_{\Omega(t_1-1,t_2+1)}|\nabla\Psi|^2(\zeta-1)dx&\leq\int_{\Omega(t_1-1,t_1)\cup\Omega(t_2,t_2+1)}|\nabla\Psi|^2\zeta dx+\frac{1}{\lambda}(I_3+I_4+I_5+I_6).
\end{split}
\end{equation}
It follows from the definition of $\zeta$ that 
\begin{equation}
\begin{split}
 e^{\hat\beta }\int_{\Omega(t_1,t_2)}|\nabla\Psi|^2dx\leq\int_{\Omega(t_1-1,t_2+1)}|\nabla\Psi|^2dx+\frac{1}{\lambda}(I_3+I_4+I_5+I_6).
\end{split}
\end{equation}

As same as the proof for \eqref{local property}, we can also prove
\begin{equation}\label{localproperty2}
|s_2-s_1|\leq C\int_{\Omega(t_1-1,t_2+1)}|\nabla\Psi|dx.
\end{equation}  Using \eqref{generalcrosssection} and \eqref{localproperty2} gives
\begin{equation}\label{I_1}\begin{split}
\quad \frac{1}{\lambda}|I_3|&\leq\frac{|s_2-s_1|(e^{\hat\beta}-1)}{\lambda(t_2-t_1)}\int_{t_1}^{t_2}\bigg|\int_{\Sigma_{x_3}}a_{i3}\partial_i\Psi dx'\bigg|dx_3\\
&\leq\frac{|s_2-s_1|(e^{\hat\beta}-1)}{\lambda(t_2-t_1)}\int_{t_1}^{t_2}\frac{C}{x_3^l} dx_3\\&\leq\frac{C (e^{\hat\beta}-1)|s_2-s_1|}{\lambda(t_1)^l}
\leq \frac{C (e^{\hat\beta}-1)}{(t_1)^l}\int_{\Omega(t_1-1,t_2+1)}|\nabla\Psi|dx\quad \\[2mm]
&\leq\frac{C(e^{\hat\beta}-1)}{(t_1)^l}|\Omega(t_1-1,t_2+1)|^{\frac{1}{2}}\bigg(\int_{\Omega(t_1-1,t_2+1)}|\nabla\Psi|^2dx\bigg)^\frac{1}{2}\\[2mm]
&\leq \frac{C(e^{\hat\beta}-1)^2(t_2-t_1+2)}{(t_1)^{2l}\epsilon}+\epsilon\int_{\Omega(t_1-1,t_2+1)}|\nabla\Psi|^2dx.\end{split}
\end{equation}

 Now, we estimate $I_4$ as follows
\begin{equation}\label{I_2}
\begin{split}
\quad\frac{1}{\lambda}|I_4|&\leq\bigg|\frac{1}{\lambda}\int_{\Omega(t_1-1,t_1)}a_{i3}\partial_i\Psi\frac{\eta_1(x_3)}{|\Sigma_{x_3}|}\zeta_{x_3}dx\bigg|\\[2mm]
&\leq\bigg|\frac{1}{\lambda}\int_{t_1-1}^{t_1}\bigg[\bigg(\int_{\Sigma_{x_3}}a_{i3}\partial_i\Psi dx_1dx_2\bigg)\frac{\eta_1(x_3)}{|\Sigma_{x_3}|}\zeta_{x_3}\bigg] dx_3\bigg|\\[2mm]
&\leq\frac{1}{\lambda}\bigg[\int_{t_1-1}^{t_1}\bigg(\int_{\Sigma_{x_3}}a_{i3}\partial_i\Psi dx_1dx_2\frac{\zeta_{x_3}}{|\Sigma_{x_3}|}\bigg)^2dx_3\bigg]^{\frac{1}{2}}\bigg[\int_{t_1-1}^{t_1}\big(\eta_1(x_3)\big)^2dx_3\bigg]^{\frac{1}{2}}\\[2mm]
&\leq\frac{1}{\lambda}\frac{C}{(t_1-1)^l}\bigg[\int_{t_1-1}^{t_1}\zeta^2_{x_3}dx_3\bigg]^{\frac{1}{2}}\bigg[\int_{t_1-1}^{t_1}\bigg(\int_{\Sigma_{x_3}}\big(\Psi(x)-s_1\big)dx'\bigg)^2dx_3\bigg]^{\frac{1}{2}}\\[2mm]
&\leq\frac{1}{\lambda}\frac{C}{(t_1-1)^l}\bigg(\int_{t_1-1}^{t_1}\hat\beta^2e^{2\hat\beta(x_3-t_1+1)}dx_3\bigg)^{\frac{1}{2}}\bigg[\int_{t_1-1}^{t_1}\bigg(\int_{\Sigma_{x_3}}\big(\Psi(x)-s_1\big)dx'\bigg)^2dx_3\bigg]^{\frac{1}{2}}\\
&\leq\frac{C}{(t_1-1)^l}\bigg(\frac{\hat\beta}{2}\big(e^{2\hat\beta}-1\big)\bigg)^\frac{1}{2}\bigg[\int_{t_1-1}^{t_1}\bigg(\int_{\Sigma_{x_3}}\big(\Psi(x)-s_1\big)^2dx'\int_{\Sigma_{x_3}}1dx'\bigg)dx_3\bigg]^{\frac{1}{2}}\\[2mm]
&\leq \frac{C}{(t_1-1)^l}\bigg(\frac{\hat\beta}{2}\big(e^{2\hat\beta}-1\big)\bigg)^\frac{1}{2}\bigg(\int_{\Omega(t_1-1,t_1)}|\nabla\Psi|^2dx\bigg)^\frac{1}{2}\\[2mm]
&\leq \frac{C\hat\beta(e^{2\hat\beta}-1)}{4\epsilon(t_1-1)^{2l}}+\frac{\epsilon}{2}\int_{\Omega(t_1-1,t_1)}|\nabla\Psi|^2dx,
\end{split}
\end{equation}
where the estimate \eqref{generalcrosssection} was used.
Similarly, one has
\begin{equation}\label{I_4}
\frac{1}{\lambda}|I_5|\leq\frac{C\hat\beta(e^{2\hat\beta}-1)}{4\epsilon(t_2)^{2l}}+\frac{\epsilon}{2}\int_{\Omega(t_2,t_2+1)}|\nabla\Psi|^2dx.
\end{equation}

For the boundary term $I_6$, it follows from \eqref{bdyterm}  that
\begin{equation}\label{bdyI2}
\begin{split}
\frac{1}{\lambda}|I_6|&\leq \frac{Ce^{\hat\beta}}{(t_1-1)^{l+1}}\int_{\partial\Omega\cap\overline{\Omega(t_1-1,t_2+1)}}|\hat\Psi| ds\\
&\leq \frac{Ce^{\hat\beta}}{(t_1-1)^{l+1}}
\sum\limits_{i=0}^{\bar K+1}\int_{\partial\Omega\cap\overline{\Omega(t_1-1+i,t_1+i)}}|\hat\Psi| ds\\
&\leq\frac{Ce^{\hat\beta}}{(t_1-1)^{l+1}}\sum\limits_{i=0}^{\bar K+1}\bigg(\int_{\Omega(t_1-1+i,t_1+i)}|\nabla\hat\Psi| dx+\int_{\Omega(t_1-1+i,t_1+i)}|\hat\Psi| dx\bigg)\\
&\leq\frac{Ce^{\hat\beta}}{(t_1-1)^{l+1}}\bigg(\int_{\Omega(t_1-1,t_2+1)}|\nabla\Psi|dx+|s_2-s_1|+\int_{\Omega(t_1-1,t_2+1)}|\hat\Psi| dx\bigg)\\
&\leq\frac{Ce^{\hat\beta}}{(t_1-1)^{l+1}}\bigg(\int_{\Omega(t_1-1,t_2+1)}|\nabla\Psi|dx+\int_{\Omega(t_1-1,t_2+1)}|\hat\Psi| dx\bigg).
\end{split}
\end{equation}

Now the key issue is to estimate the second term on the right hand side of \eqref{bdyI2}.
 Define
\begin{equation*}
\mathcal{B}_j=\dashint_{\Omega(t_1+j,t_1+j+1)}\Psi dx, \quad j=0,1,\cdots, \bar K-1.
\end{equation*}
As same as the estimate \eqref{local property}, one has
\begin{equation}\label{mathbjs1}
\mathcal |\mathcal B_j-s_1|\leq C\int_{\Omega(t_1-1,t_1+j+1)}|\nabla\Psi|dx.
\end{equation}
It follows from \eqref{mathbjs1} that
\begin{equation}\label{hatpsit1t2}\begin{split}
&\quad\int_{\Omega(t_1,t_2)}|\Psi-s_1|dx\leq\sum\limits_{j=0}^{\bar K-1}\int_{\Omega(t_1+j,t_1+j+1)}|\Psi-\mathcal B_j|+|\mathcal B_j-s_1| dx\\
&\leq C\sum\limits_{j=0}^{\bar K-1}\int_{\Omega(t_1+j,t_1+j+1)}|\nabla\Psi|dx+ C\sum\limits_{j=0}^{\bar K-1}|\mathcal B_j-s_1|\\
&\leq C\int_{\Omega(t_1,t_2)}|\nabla\Psi|dx+\sum\limits_{j=0}^{\bar K-1}\int_{\Omega(t_1-1,t_2+1)}|\nabla\Psi|dx\\
&\leq C(t_2-t_1+2)\int_{\Omega(t_1-1,t_2+1)}|\nabla\Psi|dx.
\end{split}\end{equation}
Hence,
\begin{equation}\label{hatpsit1-1}
\begin{split}
&\quad\int_{\Omega(t_1-1,t_2+1)}|\hat\Psi|
dx=\int_{\Omega(t_1-1,t_1)}|\hat\Psi| dx+\int_{\Omega(t_2,t_2+1)}|\hat\Psi| dx+\int_{\Omega(t_1,t_2)}|\hat\Psi| dx\\[2mm]&
\leq C\bigg(\int_{\Omega(t_1-1,t_1)\cup\Omega(t_2,t_2+1)}|\nabla\Psi| dx\bigg)+\int_{\Omega(t_1,t_2)}\bigg|\Psi-s_1-\frac{s_2-s_1}{t_2-t_1}(x_3-t_1)\bigg|dx\\&
\leq C\bigg(\int_{\Omega(t_1-1,t_1)\cup\Omega(t_2,t_2+1)}|\nabla\Psi| dx\bigg)+|s_2-s_1|\int_{t_1}^{t_2}\int_{\Sigma_{x_3}}\frac{x_3-t_1}{t_2-t_1}dx'dx_3+\int_{\Omega(t_1,t_2)}|\Psi-s_1| dx\\
&\leq C\bigg(\int_{\Omega(t_1-1,t_1)\cup\Omega(t_2,t_2+1)}|\nabla\Psi| dx\bigg)+C(t_2-t_1)|s_2-s_1|+C(t_2-t_1+2)\int_{\Omega(t_1-1,t_2+1)}|\nabla\Psi|dx\\
&\leq C(t_2-t_1+2)\int_{\Omega(t_1-1,t_2+1)}|\nabla\Psi|dx.
\end{split}
\end{equation}
Therefore, it follows from \eqref{bdyI2} and  \eqref{hatpsit1-1} that
\begin{equation}\label{I_6}\begin{split}
\frac{1}{\lambda}|I_6|
&\leq \frac{Ce^{\hat\beta}(t_2-t_1+2)}{(t_1-1)^{l+1}}\int_{\Omega(t_1-1,t_2+1)}|\nabla\Psi|dx\\
&\leq \frac{Ce^{\hat\beta}(t_2-t_1+2)^{\frac{3}{2}}}{(t_1-1)^{l+1}} \bigg(\int_{\Omega(t_1-1,t_2+1)}|\nabla\Psi|^2dx\bigg)^\frac{1}{2}\\
&\leq  \frac{Ce^{2\hat\beta}(t_2-t_1+2)^3}{(t_1-1)^{2l+2}\epsilon}+\epsilon\int_{\Omega(t_1-1,t_2+1)}|\nabla\Psi|^2dx.
\end{split}
\end{equation}

Collecting \eqref{I_1}, \eqref{I_2}, \eqref{I_4} and \eqref{I_6} together gives
\begin{equation}
\begin{split}
&e^{\hat\beta}\int_{\Omega(t_1,t_2)}|\nabla\Psi|^2dx\leq\frac{C(e^{\hat\beta}-1)^2(t_2-t_1+2)}{(t_1)^{2l}\epsilon}+\epsilon\int_{\Omega(t_1-1,t_2+1)}|\nabla\Psi|^2dx+\frac{C\hat\beta(e^{2\hat\beta}-1)}{4\epsilon(t_1-1)^{2l}}\\[2mm]&\quad+\frac{\epsilon}{2}\int_{\Omega(t_1-1,t_1)}|\nabla\Psi|^2dx+\frac{C\hat\beta(e^{2\hat\beta}-1)}{4\epsilon(t_2)^{2l}}+\frac{\epsilon}{2}\int_{\Omega(t_2,t_2+1)}|\nabla\Psi|^2dx+\frac{Ce^{2\hat\beta}(t_2-t_1+2)^3}{(t_1-1)^{2l+2}\epsilon}\\&\quad+\epsilon\int_{\Omega(t_1-1,t_2+1)}|\nabla\Psi|^2dx+\int_{\Omega(t_1-1,t_2+1)}|\nabla\Psi|^2dx.
\end{split}
\end{equation}
Therefore, one has
\begin{equation}\label{estimate1}
\begin{split}
e^{\hat\beta}\int_{\Omega(t_1,t_2)}|\nabla\Psi|^2dx&\leq \frac{C(e^{\hat\beta}-1)^2(t_2-t_1+2)}{\epsilon(t_1)^{2l}}+\frac{C\hat\beta(e^{2\hat\beta}-1)}{\epsilon(t_1-1)^{2l}}+\frac{Ce^{2\hat\beta}(t_2-t_1+2)^3}{(t_1-1)^{2l+2}\epsilon}+\frac{C\hat\beta(e^{2\hat\beta}-1)}{\epsilon(t_2)^{2l}}\\&\quad+(\epsilon+1)\int_{\Omega(t_1-1,t_2+1)}|\nabla\Psi|^2dx.
\end{split}
\end{equation}
Choosing $\epsilon$ small enough such that $\frac{\epsilon+1}{e^{\hat\beta}}\leq\mathfrak{b}_0<1$, then
\begin{equation}
\int_{\Omega(t_1,t_2)}|\nabla\Psi|^2dx\leq \mathfrak b_0\int_{\Omega(t_1-1,t_2+1)}|\nabla\Psi|^2dx+\frac{C(t_2-t_1+2)}{(t_1-1)^{2l}}+\frac{C(t_2-t_1+2)^3}{(t_1-1)^{2l+2}},
\end{equation}
where $C$ is a constant depending on $\hat\beta$.
Thus,
\begin{equation}
\begin{split}
&\quad\frac{1}{t_2-t_1}\int_{\Omega(t_1,t_2)}|\nabla\Psi|^2dx\\&\leq \mathfrak b_0\frac{t_2-t_1+2}{t_2-t_1}\frac{1}{t_2-t_1+2}\int_{\Omega(t_1-1,t_2+1)}|\nabla\Psi|^2dx+\frac{C(t_2-t_1+2)}{(t_1-1)^{2l}(t_2-t_1)}+\frac{C(t_2-t_1+2)^3}{(t_1-1)^{2l+2}(t_2-t_1)}.
\end{split}
\end{equation}
Choose $\bar K$ large such that $\mathfrak b_0\frac{t_2-t_1+2}{t_2-t_1}<\mathfrak b<1$.
Therefore, we have
\begin{equation}\label{t1t2}
\frac{1}{t_2-t_1}\int_{\Omega(t_1,t_2)}|\nabla\Psi|^2dx\leq \mathfrak b\frac{1}{t_2-t_1+2}\int_{\Omega(t_1-1,t_2+1)}|\nabla\Psi|^2dx+\frac{C}{(t_1-1)^{2l}}+\frac{C(t_2-t_1+2)^2}{(t_1-1)^{2l+2}}.
\end{equation}

Let $J$ be an integer satisfying $\frac{T}{2}-1\leq J<\frac{T}{2}$, $t_{1,i}=T-i$ and  $t_{2,i}=T+\bar K+i\ (i=0,1,2\cdots,J)$. It is easy to see that
\begin{equation}
\frac{(t_{2,i}-t_{1,i}+2)^2}{(t_{1,i}-1)^2}\leq C\quad\text{for all $i=0,1,2\cdots,J$}
\end{equation} provided that $T$ is large. Substituting $t_{1,i}$ and $t_{2,i}$ into \eqref{t1t2} yields
\begin{equation}\label{iteration}
\frac{1}{t_{2,i}-t_{1,i}}\int_{\Omega(t_{1,i},t_{2,i})}|\nabla\Psi|^2dx\leq \mathfrak b\frac{1}{t_{2,i+1}-t_{1,i+1}}\int_{\Omega(t_{1,i+1},t_{2,i+1})}|\nabla\Psi|^2dx+\frac{C}{(t_{1,i+1})^{2l}}.
\end{equation}
Iterating the estimate \eqref{iteration} yields
\begin{equation}
\frac{1}{t_{2,0}-t_{1,0}}\int_{\Omega(t_{1,0},t_{2,0})}|\nabla\Psi|^2dx\leq {\mathfrak b}^J\frac{1}{t_{2,J}-t_{1,J}}
\int_{\Omega(t_{1,J},t_{2,J})}|\nabla\Psi|^2dx+\sum\limits_{j=0}^{J}\mathfrak b^j\frac{C}{(t_{1,j})^{2l}}.
\end{equation}
Since $|\nabla\Psi|$ is bounded and $t_{1,j}>t_{1,J}=t_1-J=T-J>\frac{T}{2}$, one has
\begin{equation}
\frac{1}{\bar K}
\int_{\Omega(T,T+\bar K)}|\nabla\Psi|^2dx\leq C\mathfrak b^J+\frac{C}{T^{2l}}.
\end{equation}
When $T$ is large, by the definition of $J$, $\mathfrak b^J\leq\frac{C}{T^{2l}}$ always holds.
Hence
\begin{equation}
\frac{1}{\bar K}
\int_{\Omega(T,T+\bar K)}|\nabla\Psi|^2dx\leq\frac{C}{T^{2l}}.
\end{equation}
Therefore, we have
\begin{equation}
\int_{\Omega(T,T+1)}|\nabla\Psi|^2dx\leq\int_{\Omega(T,T+\bar K)}|\nabla\Psi|^2dx\leq\frac{C \bar K}{T^{2l}}\leq\frac{C}{T^{2l}}.
\end{equation}

\subsection{
 Pointwise  convergence} In this section, we use Nash-Moser iteration to estimate $\|\nabla\Psi\|_{L^\infty(\Omega(T,T+1))}$ in terms of the local energy $L^2$-estimate obtained in Sections \ref{3.1} and \ref{3.2}. This basic idea for oblique derivative boundary value problem for elliptic equation was used in \cite {Lieberman1983The,Du2011}. The key issue is the estimate near the boundary.
 
 For any point $ \bar x=(\bar x_1,\bar x_{2},\bar x_{3})\in\partial{\Omega}$ with $\bar x_3>0$ sufficiently large, let $ x_i= x_i(y_1,y_2)\in C^{2,\alpha}$ $(i=1,2,3)$ be the standard parametrization of $\partial\Omega$ in a small neighborhood of $\bar x$. Suppose that $\bf{n}$ is the unit outer normal vector satisfying
 \begin{equation}
 \cos({\bf{n},x_i})=n_i(y_1,y_2)\in C^{1,\alpha} \quad\text{for $i=1,2,3$}.
 \end{equation}

 Define the map $\mathcal{M}_{y}:y\rightarrow x$ as follows
 \begin{equation}
 x_i= x_i(y_1,y_2)+y_3^{-1}\int_{y_1}^{y_1+y_3}\int_{y_2}^{y_2+y_3}n_i(\alpha_1,\alpha_2)d\alpha_1d\alpha_2,\quad\text{for $i=1,2,3$}.
 \end{equation}
  Then the map, $\mathscr T_{\bar x}=\mathcal{M}_y^{-1}:x\rightarrow y$ to make the boundary flat and satisfies
\begin{equation*}
\mathscr T_{\bar x}(U_{\delta}\cap{\Omega})\rightarrow B^+_{R}\quad\text{and}\quad \mathscr T_{\bar x}(\partial U_{\delta}\cap{\Omega})\rightarrow \partial B^+_{R}\cap\{y_3=0\},
\end{equation*}
where $U_{\delta}$ is a neightborhood of $\bar x$, $B_R^+=\{y_1^2+y_2^2+y_3^2<R,\ y_3>0\}$, $\delta$ and $R$ are uniform constants along the boundary of $\partial\Omega$. Denote the Jacobian $\big(\frac{\partial y_i}{\partial x_j}\big)=D(x)$, then for any $\xi\in\mathbb{R}^3$, there exists a constant $C$ such that
\begin{equation}
C^{-1}|\xi|\leq|D(x)\xi|\leq C|\xi|\quad\text{and}\quad C^{-1}|\xi|\leq|D^{-1}(x)\xi|\leq C|\xi|.
\end{equation}Moreover, on the boundary $\partial U_\delta\cap\partial\Omega$ the map $\mathscr T_{\bar x}$ also satisfies
\begin{equation}
\frac{\partial y_j}{\partial x_i}\frac{\partial y_3}{\partial x_i}=0 \text{ for }j=1,2;\quad\text{and}\quad \big(\frac{\partial y_3}{\partial x_1},\frac{\partial y_3}{\partial x_2},\frac{\partial y_3}{\partial x_3}\big)\times {\textbf{n}}=0.\end{equation}
After changing variables, the problem \eqref{cylinderPsieq} becomes
\begin{equation}\label{Y_equation}
\begin{cases}
\frac{\partial}{\partial y_s}\bigg(\tilde a_{ij}(y)\frac{\partial \Psi}{\partial y_l}\frac{\partial y_l}{\partial x_j}\bigg)\frac{\partial y_s}{\partial x_i}=0&\quad\text{ in} B_R^+,\\[3mm]
\frac{\partial\Psi}{\partial y_s}\frac{\partial y_s}{\partial x_i}\frac{\partial y_3}{\partial x_i}=-\bar q \mathcal W\tilde n_3 &\quad\text{ on }  B_R^+\cap\{y_3=0\},
\end{cases}
\end{equation}
where $\mathcal W=\bigg(\sum\limits_{i=1}^3|\frac{\partial y_3}{\partial x_i}|^2\bigg)^{\frac{1}{2}}$, $\tilde a_{ij}$ and $\tilde n_3$ are the functions $a_{ij}$ and $n_3$ in $y$-coordinates, respectively. In fact, one also has
 \begin{equation}\label{tilde aij}\tilde a_{ij}\frac{\partial \Psi}{\partial y_l}\frac{\partial y_l}{\partial x_j}\frac{\partial y_3}{\partial x_i}=-\rho(\bar q^2)\bar q  \tilde n_3\mathcal W \quad\text{ on }  B_R^+\cap\{y_3=0\}.\end{equation}
For any $\varphi\in C_0^3(B^+_R)$, multiplying $\varphi$ on  both sides of \eqref{Y_equation} and integrating by parts yield
\begin{equation}\label{aij}
\int_{B_{R}^+}\tilde a_{ij}\frac{\partial \Psi}{\partial y_l}\frac{\partial y_l}{\partial x_j}\frac{\partial y_s}{\partial x_i}\frac{\partial\varphi}{\partial y_s}dy+\int_{B_R^+}\tilde a_{ij}\frac{\partial \Psi}{\partial y_l}\frac{\partial y_l}{\partial x_j}\frac{\partial^2y_s}{\partial y_s\partial x_i}\varphi dy=\int_{ B_{R}^+\cap\{y_3=0\}}\tilde a_{ij}\frac{\partial \Psi}{\partial y_l}\frac{\partial y_l}{\partial x_j}\frac{\partial y_3}{\partial x_i}\varphi dy_1dy_2.
\end{equation}
Denote $A_{sl}=\tilde a_{ij}\frac{\partial y_s}{\partial x_j}\frac{\partial y_l}{\partial x_i}$. Obviously, one has
\begin{equation}
\lambda|\xi|^2\leq A_{sl}\xi_s\xi_l\leq\Lambda|\xi|^2\quad \text{and}\quad \bigg|\frac{\partial A_{ls}}{\partial y_r}\bigg|\leq C.
\end{equation}
 It follows from \eqref{tilde aij} and \eqref{aij} that
\begin{equation}\label{Psi}
\int_{B^+_R}A_{ls}\frac{\partial\Psi}{\partial y_l}\frac{\partial\varphi}{\partial y_s}dy=\int_{ B_R^+\cap\{y_3=0\}}A_{l3}\frac{\partial\Psi}{\partial y_l}\varphi dy_1dy_2=-\int_{ B_R^+\cap\{y_3=0\}}\varphi\rho(\bar q^2)\bar q  \tilde n_3\mathcal W dy_1dy_2.
\end{equation}\label{partialA}

Denote $g=\frac{-\rho(q^2)q \tilde  n_3\mathcal W}{A_{33}}$. The definition of $A_{33}$ shows
 \begin{equation}
 \lambda \mathcal W^2\leq \tilde a_{ij}\frac{\partial y_3}{\partial x_i}\frac{\partial y_3}{\partial x_j}=A_{33}\leq \Lambda \mathcal W^2.
\end{equation}
It follows from the assumption \eqref{boundary_algebratic_rate} that 
\begin{equation}
\|g(y')\|_{C^2( B_R^+\cap\{y_3=0\})} \leq \frac{C}{\bar x^{l+1}_3},
\end{equation} where $y'=(y_1,y_2).$
Given $\varsigma(z')\in C^2_0(\mathbb R^2)$ satisfying $\int_{\mathbb R^2}\varsigma(z') dz'=1$, define
\begin{equation}\label{vartheta1}
\vartheta(y)=y_3\int _{\mathbb R^2}g(y'-y_3z')\varsigma(z')dz'.
\end{equation}
Then
\begin{equation}\label{varthetaboundary}
\vartheta (y',0)=\frac{\partial\vartheta}{\partial y_1}(y',0)=\frac{\partial\vartheta}{\partial y_2}(y',0)=0 \quad \text{and}\quad \frac{\partial\vartheta}{\partial y_3}(y',0)=g(y').
\end{equation}
The straightforward computations yield
\begin{equation}\label{vartheta}
\|\vartheta\|_{C^2{(B_R^+)}}\leq \frac{C}{\bar x_3^{l+1}}.
\end{equation}
Define
$\kappa=\partial_s( A_{sl}\partial_l\vartheta)$. It follows from \eqref{vartheta} and the definition of $A_{sl}$ that 
\begin{equation}\label{f}
\|\kappa\|_{L^{\infty}( B_{R}^+)}\leq \frac{C}{\bar x_3^{l+1}}
\end{equation}
and
\begin{equation}\label{vartheta_estimate}-\int_{B_R^+}A_{ls}\partial_l\vartheta\partial_s\varphi+\int_{ B_{R}^+\cap\{y_3=0\}}A_{l3}\partial_l \vartheta\varphi dx=\int_{B_R^+}\kappa\varphi dx\quad \text{for any }\varphi\in C_0^3(B^+_R).\end{equation}
Combining \eqref{Psi} and \eqref{vartheta_estimate} yields
\begin{equation}\label{u}
\int_{B^+_R}A_{ls}\frac{\partial(\Psi-\vartheta)}{\partial y_l}\frac{\partial\varphi}{\partial y_s}dy=\int_{B_R^+}\kappa\varphi dy,
\end{equation} where the boundary term is eliminated because of \eqref{varthetaboundary}.
Denote $v=\Psi-\vartheta$. Replacing $\varphi$ by each $\frac{\partial\varphi}{\partial y_i}\ (i=1,2,3)$ in \eqref{u} and integrating by parts yield
\begin{equation}\label{u_estimate}
\begin{split}
\quad&\int_{ B_R^+}\kappa\frac{\partial \varphi}{\partial y_i}dy=-\int_{B_R^+}A_{ls}\frac{\partial}{\partial y_l}\bigg(\frac{\partial v}{\partial y_i}\bigg)\frac{\partial\varphi}{\partial y_s}dy\\&-\int_{B_R^+}\frac{\partial A_{ls}}{\partial y_i}\frac{\partial v}{\partial y_l}\frac{\partial\varphi}{\partial y_s}dy+\delta_{i3}\int_{ B_{R}^+\cap\{y_3=0\}}A_{ls}\frac{\partial v}{\partial y_l}\frac{\partial\varphi}{\partial y_s}ds,\quad\text{for $i=1,2,3$}.
\end{split}
\end{equation}

Define
\begin{equation*}
\Theta=\max\limits_{ B_R^+\cap\{y_3=0\}}\bigg|\frac{\partial v}{\partial y_3}\bigg|,\quad w_1=\frac{\partial v}{\partial y_1},\quad w_2=\frac{\partial v}{\partial y_2},\quad w_3=\frac{\partial v}{\partial y_3}-\Theta.\end{equation*}
 It follows from \eqref{vartheta} that 
  \begin{equation}\label{Theta}
\Theta\leq\max\limits_{ B_R^+\cap\{y_3=0\}}\bigg|\frac{\partial \Psi}{\partial y_3}\bigg|+\max\limits_{ B_R^+\cap\{y_3=0\}}\bigg|\frac{\partial \vartheta}{\partial y_3}\bigg|\leq \frac{C}{\bar x_3^{l+1}}.\end{equation}
Moreover, the equality \eqref{u_estimate} can be written as
\begin{equation}
\label{uw_estimate}
\begin{split}
&\quad\int_{B_R^+}A_{ls}\frac{\partial w_i}{\partial y_l}\frac{\partial\varphi}{\partial y_s}dy+\int_{B_R^+}w_l\frac{\partial A_{ls}}{\partial y_i}\frac{\partial \varphi}{\partial y_s}dy\\[3mm]
&=\delta_{i3}\int_{ B_{R}^+\cap\{y_3=0\}}A_{ls}\frac{\partial v}{\partial y_l}\frac{\partial\varphi}{\partial y_s}ds-\int_{ B_R^+}\kappa\frac{\partial \varphi}{\partial y_i}dy-\delta_{l3}\Theta\int_{B_R^+}\frac{\partial A_{ls}}{\partial y_i}\frac{\partial \varphi}{\partial y_s}dy,\quad(i=1,2,3).
\end{split}
\end{equation}

Now we use Nash-Moser iteration to get the $L^\infty-$norm of $w_i$. We consider only the case $w_i\geq 0$. If  $w_i\geq0$ does not hold, one can repeat the following proof for $w_i^+$ and $w_i^-$, respectively. It is easy to see that
\begin{equation}\label{w3}
w_3=0 \quad\text{on $B_{R}^+\cap\{y_3=0\}$}.\end{equation}
 For $i=1,2,3$, denote $\varphi_i=\eta^2{w}_i^{\mu+1}$ with some $\mu\geq0$ and some nonnegative function $\eta\in C^2_0({B^+_R})$.
   Direct calculations give
\begin{equation*}
\frac{\partial\varphi_i}{\partial y_r}=2\eta\frac{\partial \eta}{\partial y_r}{w}_i^{\mu+1}+\eta^2(\mu+1)\frac{\partial w_i}{\partial y_r}{w_i}^\mu, \quad\text{for $i=1,2,3$}.
\end{equation*}
Replacing $\varphi$ by $\varphi_i$ for $(i=1,2,3)$ in \eqref{uw_estimate} yields
\begin{equation}
\begin{split}
&\quad\int_{B_R^+}A_{ls}\frac{\partial w_i}{\partial y_l}\frac{\partial\varphi_i}{\partial y_s}dy+\int_{B_R^+}w_l\frac{\partial A_{ls}}{\partial y_i}\frac{\partial \varphi_i}{\partial y_s}dy\\
&=-\int_{ B_R^+}\kappa\frac{\partial \varphi_i}{\partial y_i}dy-\Theta\int_{B_R^+}\frac{\partial A_{3s}}{\partial y_i}\frac{\partial \varphi_i}{\partial y_s}dy\\[2mm]
&=-\int_{B_R^+}(\delta_{is}\kappa+\Theta\frac{\partial A_{3s}}{\partial y_i})\frac{\partial\varphi_i}{\partial y_s}dy, \quad\text{for $i=1,2,3$},\end{split}
\end{equation}
where the boundary term vanishes due to \eqref{w3}.
The straightforward computations give
\begin{equation}\label{A_estimate}
\begin{split}
&\quad\int_{B_R^+}A_{ls}\frac{\partial w_i}{\partial y_l}\bigg(2\eta\frac{\partial \eta}{\partial y_s}{w}_i^{\mu+1}+\eta^2(\mu+1)\frac{\partial w_i}{\partial y_s}{w_i}^\mu\bigg) dy\\[2mm]
&\geq\lambda(\mu+1)\int_{B_R^+}\eta^2w_i^\mu|Dw_i|^2
dy-2\Lambda\int_{B_R^+}\eta w_i^{\mu+1}|D\eta||Dw_i|dy\\[2mm]&\geq\lambda(\mu+1)\int_{B_R^+}\eta^2w_i^\mu|Dw_i|^2
dy-\epsilon\int_{B_R^+}\eta^2w_i^\mu|Dw_i|^2-\frac{1}{\epsilon}\int_{B_R^+}|D\eta|^2w_i^{\mu+2}dy
\end{split}
\end{equation}
and
\begin{equation}\label{w_A_estimate}
\begin{split}
&\quad\int_{B_R^+}w_l\frac{\partial A_{ls}}{\partial y_i}\frac{\partial \varphi_i}{\partial y_s}dy\\&\leq C\int_{B_R^+} \eta|D\eta|w_i^{\mu+1}\bigg|\frac{\partial A_{ls}}{\partial y_i}w_l\bigg|+\bigg|\frac{\partial A_{ls}}{\partial y_i}w_l\bigg|\eta^2(\mu+1)w_i^{\mu}|Dw_i|dy\\
&\leq C\int_{B_R^+}|D\eta|^2w_i^{\mu+2}+\eta^2\bigg|\frac{\partial A_{ls}}{\partial y_i}w_l\bigg|^2w_i^\mu+\epsilon\eta^2(\mu+1)^2w_i^\mu|Dw_i|^2+\frac{1}{\epsilon}\bigg|\frac{\partial A_{ls}}{\partial y_i}w_l\bigg|^2\eta^2w_i^\mu dy\\
&\leq C\int_{B_R^+}|D\eta|^2w_i^{\mu+2}+\eta^2|\bar w|^2w_i^\mu+\epsilon\eta^2(\mu+1)^2w_i^\mu|Dw_i|^2+\frac{1}{\epsilon}|\bar w|^2\eta^2w_i^\mu dy,
\end{split}
\end{equation}
where  $|\bar w|^2=w_1^2+w_2^2+w_3^2$.
Denote \begin{equation*}\mathcal F_{is}=\delta_{is}\kappa+\Theta\frac{\partial A_{3s}}{\partial y_i}\quad\text{and}\quad\mathcal K=\max|\mathcal F_{is}|.\end{equation*} It follows from \eqref{f} and \eqref{Theta} that  \begin{equation}\label{F}
|\mathcal K|\leq \frac{C}{\bar x_3^{l+1}}.
\end{equation}
If $|w_i|\leq \mathcal K$, then we prove the pointwise bounds claimed in Theorem \ref{convergence_thm} . Hence we assume
\begin{equation}\label{wi}|w_i|\geq \mathcal K,\quad (i=1,2,3).\end{equation} Therefore, one has
\begin{equation}\label{F_estimate}\begin{split}
&\quad\int_{B_{R}^+}\mathcal F_{is}\frac{\partial\varphi_i}{\partial y_s}dy\\&\leq \int_{B_R^+}\mathcal K\eta|D\eta|w_i^{\mu+1}+\mathcal K\eta^2(\mu+1)w_i^\mu|Dw_i|dy\\
&\leq  \int_{B_R^+}\eta|D\eta|w_i^{\mu+2}+\epsilon\eta^2(\mu+1)^2w_i^\mu|Dw_i|^2+\frac{1}{\epsilon}\eta^2\mathcal K^2w_i^\mu dy\\
&\leq \int_{B_R^+}\eta|D\eta|w_i^{\mu+2}+\epsilon\eta^2(\mu+1)^2w_i^\mu|Dw_i|^2+\frac{1}{\epsilon}\eta^2w_i^{\mu+2} dy.  \end{split}
\end{equation}
Combining \eqref{A_estimate}, \eqref{w_A_estimate}, and \eqref{F_estimate} yields
\begin{equation}
\begin{split}
&\quad\lambda(\mu+1)\int_{B_R^+}\eta^2w_i^\mu|Dw_i|^2dy- \big(\epsilon+2\epsilon(\mu+1)^2\big)
\int_{ B_R^+}\eta^2w_i^\mu|Dw_i|^2dy\\
&\leq (C+\frac{C}{\epsilon})\int_{ B_R^+}\eta^2|\bar w|^2w_i^\mu dy+(C+\frac{C}{\epsilon})\int_{ B_R^+}(\eta^2+|D\eta|^2)w_i^{\mu+2} dy.
\end{split}\end{equation}
If we choose $\epsilon=\frac{\lambda}{8(\mu+1)}$, then one has
\begin{equation}
\int_{ B_R^+}\eta^2w_i^2|Dw_i|^2dy\leq C\int_{ B_R^+}|D\eta|^2|w_i|^{\mu+2}+\eta^2|w_i|^{\mu+2}+\eta^2|\bar w|^2|w_i|^\mu dy.
\end{equation}
Therefore,
\begin{equation}
\int_{ B_R^+}\bigg|D(\eta w_i^\frac{\mu+2}{2})\bigg|^2dy\leq C(\mu+2)^2\int_{ B_R^+}|D\eta|^2|w_i|^{\mu+2}+\eta^2|w_i|^{\mu+2}+\eta^2|\bar w|^2|w_i|^\mu dy.
\end{equation}
Applying Sobolev inequality yields
\begin{equation}
\bigg(\int_{ B_R^+}(\eta w_i^\frac{\mu+2}{2})^6dy\bigg)^\frac{1}{3}\leq C(\mu+2)^2\int_{ B_R^+}|D\eta|^2|w_i|^{\mu+2}+\eta^2|w_i|^{\mu+2}+\eta^2|\bar w|^2|w_i|^\mu dy.
\end{equation}

Set \begin{equation*}
R_j=(\frac{1}{2}+\frac{1}{2^{j+1}})R\quad\text{and}\quad \gamma_j=2\cdot 3^j.
\end{equation*}
Let $\eta_j\in C_0^\infty(B_{R_j}^+)$ satisfy \begin{equation*}
	\quad \eta_j=1\text{ in } B_{R_{j+1}}^+ \quad\text{and}\quad |D\eta_j|\leq\frac{4}{R_j-R_{j+1}}.
\end{equation*}
Choosing $\mu=\gamma_j-2$ yields
\begin{equation*}
\bigg(\int_{ B_{R_{j+1}}^+}w_i^{\gamma_{j+1}}dy\bigg)^\frac{1}{3}\leq C\gamma_j^2\int_{ B_{R_j}^+}\big(2^{j+1}R\big)^2w_i^{\gamma_j}+w_i^{\gamma_j}+|\bar w|^2w_i^{\gamma_j-2} dy.
\end{equation*}
Thus, one has
\begin{equation}
\bigg(\int_{ B_{R_{j+1}}^+}w_i^{\gamma_{j+1}}dy\bigg)^{\frac{1}{\gamma_{j+1}}}\leq\bigg(\int_{ B_{R_j}^+}A_jw_i^{\gamma_j}+B_jw_i^{\gamma_j}+B_j|\bar w|^2w_i^{\gamma_j-2} dy\bigg)^\frac{1}{\gamma_j},
\end{equation}
where $A_j=C\gamma_j^2\big(2^{j+1}/R\big)^2$ and $B_j=C\gamma_j^2$.
Note that
\begin{equation}
\int_{B_{R_j}^+}|\bar w|^2w_i^{\gamma_j-2}dy\leq\bigg(\int_{ B_{R_j}^+}w_i^{\gamma_j}\bigg)^\frac{\gamma_j-2}{\gamma_j}\bigg(\int_{ B_{R_j}^+}|\bar w|^{\gamma_j}\bigg)^{\frac{2}{\gamma_j}}.
\end{equation}
Therefore, one has
\begin{equation}
\begin{split}
&\quad\bigg(\int_{ B_{R_{j+1}}^+}w_i^{\gamma_{j+1}}dy\bigg)^{\frac{1}{\gamma_{j+1}}}\\
&\leq \bigg[A_j\int_{ B_{R_j}^+}w_i^{\gamma_j}+B_j\int_{ B_{R_j}^+}w_i^{\gamma_j}+B_j\bigg(\int_{ B_{R_j}^+}w_i^{\gamma_j}\bigg)^\frac{\gamma_j-2}{\gamma_j}\bigg(\int_{ B_{R_j}^+}|\bar w|^{\gamma_j}\bigg)^{\frac{2}{\gamma_j}}\bigg]^\frac{1}{\gamma_j}\\[2mm]
&\leq \|w_i\|_{L^{\gamma_j}(B_{R_{j}}^+)}^{\frac{\gamma_j-2}{\gamma_j}}\bigg[(A_j+B_j)\|w_i\|_{L^{\gamma_j}(B_{R_{j}}^+)}^2+B_j\|\bar w\|_{L^{\gamma_j}(B_{R_{j}}^+)}^2\bigg]^{\frac{1}{\gamma_j}}\\[2mm]&\leq\|w_i\|_{L^{\gamma_j}(B_{R_{j}}^+)}^{\frac{\gamma_j-2}{\gamma_j}}(A_j+2B_j)^{\frac{1}{\gamma_j}}\|\bar w\|_{L^{\gamma_j}(B_{R_{j}}^+)}^\frac{2}{\gamma_j}.
\end{split}
\end{equation}
This implies that
\begin{equation}\label{w}
\begin{split}
&\quad\sum\limits_{i=1}^{3}\|w_i\|_{L^{\gamma_{j+1}}(B_{R_{j+1}}^+)}\\&\leq\sum\limits_{i=1}^3\|w_i\|_{L^{\gamma_j}(B_{R_{j}}^+)}^{\frac{\gamma_j-2}{\gamma_j}}(A_j+2B_j)^{\frac{1}{\gamma_j}}\|\bar w\|_{L^{\gamma_j}(B_{R_{j}}^+)}^\frac{2}{\gamma_j}\\
&\leq \bigg[\sum\limits_{i=1}^3\bigg(\|w_i\|_{L^{\gamma_j}(B_{R_{j}}^+)}^{\frac{\gamma_j-2}{\gamma_j}}(A_j+2B_j)^{\frac{1}{\gamma_j}}\|\bar w\|_{L^{\gamma_j}(B_{R_{j}}^+)}^\frac{2}{\gamma_j}\bigg)^{\frac{\gamma_j}{\gamma_j-2}}\bigg]^\frac{\gamma_j-2}{\gamma_j}3^{\frac{2}{\gamma_j}}\\
&\leq(9A_j+18B_j)^\frac{1}{\gamma_j}\bigg(\sum\limits_{i=1}^{3}\|w_i\|_{L^{\gamma_j}(B_{R_{j}}^+)}\bigg)^\frac{2}{\gamma_j}\bigg(\sum\limits_{i=1}^{3}\|w_i\|_{L^{\gamma_j}(B_{R_{j}}^+)}\bigg)^{\frac{\gamma_j-2}{\gamma_j}}\\
&\leq (9A_j+18B_j)^\frac{1}{\gamma_j}\sum\limits_{i=1}^{3}\|w_i\|_{L^{\gamma_j}(B_{R_{j}}^+)}.
\end{split}
\end{equation}

Set \begin{equation*}
\mathcal{Q}_{j+1}=\sum\limits_{i=1}^{3}\|w_i\|_{L^{\gamma_{j+1}}(B_{R_{j+1}}^+)}\quad \text{and} \quad S_j=(9A_j+18B_j)^\frac{1}{\gamma_j}.
\end{equation*}
Hence the estimate \eqref{w} can be written as
\begin{equation}
\mathcal{Q}_{j+1}\leq S_j \mathcal{Q}_j.
\end{equation}
Obviously,
\begin{equation}
S_j=(9A_j+18B_j)^\frac{1}{\gamma_j}\leq \bigg(C\gamma_j^2\big(2^{j+1}/R\big)^2\bigg)^\frac{1}{\gamma_j}\leq C^\frac{1}{\gamma_j}18^{\frac{j}{\gamma_j}}.
\end{equation}
Therefore, one has
\begin{equation}\label{iterationQ}
\mathcal{Q}_{j+1}\leq C^{\sum\limits_{i=1}^j\frac{1}{\gamma_i}}18^{\sum\limits_{i=1}^j\frac{i}{\gamma_i}}\mathcal{Q}_0.
\end{equation}
Note that
\begin{equation*}
\sum\limits_{i=1}^j\frac{1}{\gamma_i}\leq C\quad \text{and}\quad\sum\limits_{i=1}^j\frac{i}{\gamma_i}\leq C.
\end{equation*}
Taking $j\rightarrow\infty$ in \eqref{iterationQ} yields
\begin{equation}
\sum\limits_{i=1}^3\|w_i\|_{L^\infty(B_{\frac{1}{2}R}^+)}\leq C\sum\limits_{i=1}^3\|w_i\|_{L^2(B_R^+)},
\end{equation}
provided that \eqref{wi} holds.
Hence, we have
\begin{equation}\label{nash-moser}
\sum\limits_{i=1}^3\|w_i\|_{L^\infty(B_{\frac{1}{2}R}^+)}\leq C\sum\limits_{i=1}^3\|w_i\|_{L^2(B_R^+)}+\mathcal K.
\end{equation}
It follows from the definition of $w_i$, \eqref{vartheta}, \eqref{f} and \eqref{Theta}, one has
\begin{equation}\label{bdygradientPsi}
	\|\nabla\Psi\|_{L^{\infty}(U_\delta)}\leq C\big(\|\nabla\Psi\|_{L^2(U_\delta)}+\mathcal K+\Theta+\|\vartheta\|_{C^2{(B_R^+)}}\big).
	\end{equation}
	For the interior estimate, as same as the estimate for \eqref{bdygradientPsi} with $\vartheta=0$, $\Theta=0$ and $ \mathcal K=0$ to obtain for any $B_R\in\Omega$, one has
	\begin{equation}\label{interior}
	\|\nabla\Psi\|_{L^\infty(B_{\frac{R}{2}})}\leq C\|\nabla\Psi\|_{L^2(B_R)}.
	\end{equation}
	In a word, when the boundary satisfies \eqref{boundary_algebratic_rate} and $x_3$ is sufficiently large, combining \eqref{bdygradientPsi} and \eqref{interior} yields
	\begin{equation}
	|\nabla\phi_1 (x_1, x_2, x_3)-(0,0,\bar q)|\leq C  x_3^{-{l}}.
	\end{equation}
	
	For the case of the nozzle is a perfect cylinder for $x_3$ sufficient large, obviously the third component of the normal direction at the boundary is zero, i.e. $n_3=0$. Applying the estimate \eqref{bdygradientPsi} with $\vartheta=0$, $\Theta=0$ and $ \mathcal K=0$ yields that there is a positive constant $\mathfrak d$ such that
	\begin{equation}
	\|\nabla\Phi\|_{L^{\infty}(\Omega(T,T+1))}\leq \frac{C}{e^{\mathfrak d T}}.
	\end{equation}
	Hence, the proof of Theorem \ref{convergence_thm} is finished.

\medskip
	\textbf{Acknowledgement.}
	The authors were supported partially by NSFC grants 11971307 and 11631008. Xie is also supported by Young Changjiang Scholar of Ministry of Education in China.
	\nocite{*}
	\bibliographystyle{abbrv}
	\bibliography{ref}

 \end{document}